\numberwithin{equation}{section}
\newtheorem{Theorem}{Theorem}[section]
\newtheorem{Lemma}[Theorem]{Lemma}
\newtheorem{Proposition}[Theorem]{Proposition}
\theoremstyle{definition}
\newtheorem{Definition}[Theorem]{Definition}
\newtheorem{Example}[Theorem]{Example}
\newtheorem{Remark}[Theorem]{Remark}
\newcommand{\NN}{{\mathbb{N}}} 
\newcommand{\QQ}{{\mathbb{Q}}} 
\newcommand{\RR}{{\mathbb{R}}} 
\newcommand{\TT}{{\mathbb{T}}} 
\newcommand{\ZZ}{{\mathbb{Z}}} 
\newcommand{\inter}{{\operatorname{int}}} 
\newcommand{\pr}{{\operatorname{pr}}} 
\newcommand{\g}{{\mathfrak{g}}} 
\newcommand{\SO}{{\operatorname{SO}}} 
\newcommand{\CIN}{{C^\infty}} 
\newcommand{\hook}{{\lrcorner\,}} 
\def \red {{\text{red}}}
\def \reg {{\text{reg}}}
\newcommand{\toto}{{\rightrightarrows}}
\def \calD {{\mathcal{D}}}
\def \tU {{\widetilde{U}}}
\DeclareMathOperator {\image}{Image}
\newcommand{\hypref}[2]{{#2~\ref{#1}}}
\begin{document}


\newcommand{\arXivNumber}{1408.1555}

\renewcommand{\PaperNumber}{026}

\FirstPageHeading

\ShortArticleName{Basic Forms and Orbit Spaces: a Dif\/feological Approach}

\ArticleName{Basic Forms and Orbit Spaces:\\ a Dif\/feological Approach}

\Author{Yael KARSHON~$^\dag$ and Jordan WATTS~$^\ddag$}

\AuthorNameForHeading{Y.~Karshon and J.~Watts}

\Address{$^\dag$~Department of Mathematics, University of Toronto,\\
\hphantom{$^\dag$}~40 St.\ George Street, Toronto Ontario M5S 2E4, Canada}
\EmailD{\href{mailto:karshon@math.toronto.edu}{karshon@math.toronto.edu}}
\URLaddressD{\url{http://www.math.toronto.edu/karshon/}}

\Address{$^\ddag$~Department of Mathematics, University of Colorado Boulder,\\
\hphantom{$^\ddag$}~Campus Box 395, Boulder, CO, 80309, USA}
\EmailD{\href{mailto:jordan.watts@colorado.edu}{jordan.watts@colorado.edu}}
\URLaddressD{\url{http://euclid.colorado.edu/~jowa8403}}

\ArticleDates{Received October 06, 2015, in f\/inal form February 16, 2016; Published online March 08, 2016}	

\Abstract{If a Lie group acts on a manifold freely and properly,
pulling back by the quotient map gives an isomorphism between the
dif\/ferential forms on the quotient manifold
and the basic dif\/ferential forms upstairs.
We show that this result remains true for actions that are not necessarily
free nor proper, as long as the identity component
acts properly, where on the quotient space we take dif\/ferential forms
in the dif\/feological sense.}

\Keywords{dif\/feology; Lie group actions; orbit space; basic dif\/ferential forms}

\Classification{58D19; 57R99}

\vspace{-2mm}

\section{Introduction}\label{sec:intro}

Let $M$ be a smooth manifold and $G$ a Lie group acting on $M$.
A \emph{basic differential form} on $M$
is a dif\/ferential form that is $G$-invariant and horizontal;
the latter means that
evaluating the form on any vector that is tangent to a $G$-orbit yields $0$.
Basic dif\/ferential forms constitute a~subcomplex of the de Rham complex.
If $G$ acts properly and with a constant orbit-type, then the quotient~$M/G$ is a manifold,
and, denoting the quotient map by $\pi \colon M \to M/G$,
the pullback by this map
gives an isomorphism of the de Rham complex on~$M/G$
with the complex of basic forms on~$M$.
Even if~$M/G$ is not a manifold, if $G$ acts properly, then
the cohomology of the complex of basic forms
is isomorphic to the singular cohomology of $M/G$ with real coef\/f\/icients;
this was shown by Koszul in 1953~\cite{koszul} for compact group actions
and by Palais in 1961~\cite{palais} for proper group actions.
In light of these facts, some authors def\/ine the de Rham complex on~$M/G$
to be the complex of basic forms on~$M$.

There is another, intrinsic, def\/inition of a dif\/ferential form on $M/G$,
which comes from viewing $M/G$ as a dif\/feological space (see Section~\ref{sec:diffeology}). This def\/inition agrees with the usual one when $M/G$ is a manifold. Dif\/ferential forms on dif\/feological spaces
admit exterior derivatives,
wedge products, and pullbacks under smooth maps.
Spaces of dif\/ferential forms are themselves dif\/feological spaces too.
With this notion, here is our main result:

\begin{Theorem}\label{t:main}\looseness=-1
Let $G$ be a Lie group acting on a manifold $M$.
Let $\pi\colon M\to M/G$ be the quotient map.
\begin{enumerate}\itemsep=0pt
\item[$(i)$] The pullback map $\pi^* \colon \Omega^*(M/G) \to \Omega^*(M)$
is one-to-one. Its image is contained in the space $\Omega^*_{\rm basic}(M)$
of basic forms. As a map to its image,
the map $\pi^*$ is an isomorphism of differential graded algebras
and a diffeological diffeomorphism.
\item[$(ii)$] If the restriction of the action to the identity component of~$G$
is proper, then the image of the pullback map is equal to the space
of basic forms:
\begin{gather*}\xymatrix{
\pi^* \colon \ \Omega^*(M/G) \ar[r]^(.55){\cong}
& \Omega^*_{\rm basic}(M)}.\end{gather*}
\end{enumerate}
\end{Theorem}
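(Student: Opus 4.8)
The plan for part~$(i)$ is to argue plot-by-plot. By definition of the quotient diffeology every plot $p\colon U\to M/G$ lifts locally through $\pi$, so if $\pi^*\alpha=0$ then for any such local lift $\tilde p$ we have $\alpha(p)=\tilde p^*(\pi^*\alpha)=0$ on the domain of $\tilde p$; the locality axiom for diffeological forms then forces $\alpha(p)=0$ for all $p$, giving injectivity. For the image, $G$-invariance of $\pi^*\alpha$ follows from $\pi\circ L_g=\pi$, while horizontality follows by pulling back along $F\colon\RR\times M\to M$, $F(t,x)=\exp(t\xi)x$: since $\pi\circ F$ is independent of $t$ we get $F^*\pi^*\alpha=\pr^*\pi^*\alpha$, which has no $dt$-component, so contracting with $\partial_t$ at $t=0$ shows $\iota_{\xi_M}\pi^*\alpha=0$ for every fundamental vector field, and these span the tangent spaces to the orbits. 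As $\pi^*$ is an injective morphism of differential graded algebras it is a DGA-isomorphism onto its image; that it is also a diffeological diffeomorphism onto its image, with the subspace functional diffeology, is a formal consequence of $\pi$ being a subduction together with the definition of the functional diffeology on spaces of forms.

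For part~$(ii)$ the plan is to descend a given basic form $\beta\in\Omega^*_{\rm basic}(M)$. The computational engine is the identity $a^*\beta=\pr^*\beta$ on $G\times M$, where $a$ is the action: expanding a tangent vector at $(g,x)$, using $L_g^*\beta=\beta$, and then discarding all orbit-directions by horizontality, one checks this directly. Granting it, I would define $\alpha$ on a plot $p\colon U\to M/G$ by choosing near each point a local lift $\tilde p$ and setting $\alpha(p):=\tilde p^*\beta$. Compatibility with reparametrisation is then automatic, and $\pi^*\alpha=\beta$ follows by taking the identity lift, so the whole construction reduces to \emph{well-definedness}: any two local lifts $\tilde p_0,\tilde p_1$ of the same plot must satisfy $\tilde p_0^*\beta=\tilde p_1^*\beta$.

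The heart of the argument, and where properness of the identity component enters, is the following local lifting statement: near each $u_0$, after shrinking $U$, there is a smooth $g\colon U\to G$ with $\tilde p_1=g\cdot\tilde p_0$; equivalently, the smooth map $G\times M\to M\times_{M/G}M$, $(g,x)\mapsto(x,gx)$, is a subduction. Once this is available, $\tilde p_1=a\circ(g,\tilde p_0)$ yields $\tilde p_1^*\beta=(g,\tilde p_0)^*a^*\beta=(g,\tilde p_0)^*\pr^*\beta=\tilde p_0^*\beta$, finishing the descent. I expect this lifting lemma to be the main obstacle. The plan is to prove it via the slice theorem for proper actions~\cite{palais}: replacing $\tilde p_1$ by $g_0^{-1}\tilde p_1$ we may assume $\tilde p_0(u_0)=\tilde p_1(u_0)=x_0$, pass to a slice $S$ at $x_0$ with its compact stabilizer $H=(G_0)_{x_0}$, lift both maps into the tube $G_0\times_H S$, and solve for $g(u)$ smoothly there. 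The delicate points are exactly the singular features properness is meant to tame: orbits of varying dimension near $x_0$, so that there is no global normal subbundle to exploit, and the possibility that the full stabilizer $G_{x_0}$ meets components of $G$ outside $G_0$. Showing that nearby points of a common $G$-orbit lie, after a fixed group element, in the same $G_0$-orbit and depend smoothly on the parameter is the technical core, and it is the compactness of $H$ coming from $G_0$-properness that makes the smooth solution for $g$ possible.
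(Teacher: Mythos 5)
Your plan for part~(i) is essentially the paper's own proof (injectivity from local lifts of plots, invariance from $\pi\circ L_g=\pi$, horizontality from the constancy of $\pi$ along orbit maps, and smoothness of $(\pi^*)^{-1}$ from the definition of the functional diffeology on forms), and it is fine.

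Part~(ii) has a genuine gap, located exactly where you place the weight of the argument. The ``local lifting statement'' you propose --- near each $u_0$ there is a smooth $g\colon U\to G$ with $\tilde p_1=g\cdot\tilde p_0$, equivalently that $G\times M\to M\times_{M/G}M$, $(g,x)\mapsto(x,g\cdot x)$, is a subduction --- is false, even under the hypotheses of the theorem, and even for finite groups. Take $G=\{1,-1\}$ acting on $M=\RR$ by $(\pm 1)\cdot x=\pm x$; the identity component is trivial, hence acts properly. Let $p_1(t)$ be $e^{-1/t^2}$ for $t>0$, $-e^{-1/t^2}$ for $t<0$, and $0$ at $t=0$, and let $p_2(t)=-e^{-1/t^2}$ for $t\neq 0$, $p_2(0)=0$. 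Both are plots and $\pi\circ p_1=\pi\circ p_2$, yet on no neighbourhood of $t=0$ do they differ by even a continuous map to $G$, since the required element is $+1$ for $t<0$ and $-1$ for $t>0$ (this is \hypref{x:plots differ nonsmoothly}{Example}). So no slice-theorem or compactness argument can deliver your smooth gauge $g(u)$: the statement you reduce to is simply stronger than what is true. What must be proved is the weaker claim $\tilde p_0^*\beta=\tilde p_1^*\beta$ for \emph{basic} $\beta$, without any continuity of the gauge; that discrepancy is precisely the technical heart of the theorem.

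The route that does work (the paper's): for each component $\gamma\in G/G_0$, the set $C_\gamma$ of parameters $u$ at which some $g\in\gamma$ carries $\tilde p_0(u)$ to $\tilde p_1(u)$ is closed --- this is where properness of $G_0$ enters (\hypref{l:C closed}{Lemma}); by the Baire category theorem (\hypref{l:baire}{Lemma}) the union of the interiors $\inter(C_\gamma)$ is dense in the parameter domain; on each $\inter(C_\gamma)$ one reduces, by a single fixed translation, to the connected group $G_0$, which is handled by induction on dimension via the slice theorem, with the fixed-point case settled by a limit computation (\hypref{l:techn1}{Lemma}, \hypref{l:techn2}{Lemma}) that is valid for a possibly discontinuous family $t\mapsto g_t$; finally one passes from the dense open set to the whole domain by continuity of the two pullback forms. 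Your identity $a^*\beta=\pr^*\beta$ is correct, and your instinct that slices, compact stabilisers, and properness are the relevant tools is sound, but they must be deployed to circumvent the failure of smooth lifting, not to establish it.
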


We prove Theorem~\ref{t:main} in Section~\ref{sec:surjects},
after the proof of Proposition~\ref{p:geomquot}.

\begin{Remark} \quad
\begin{enumerate}\itemsep=0pt
\item
Part~(i) of Theorem~\ref{t:main}, which follows from
the results of Sections~\ref{sec:diffeology} and~\ref{sec:injects},
is not dif\/f\/icult. The technical heart of Theorem~\ref{t:main} is Part (ii), which is proved in \hypref{p:geomquot}{Proposition}: if the identity component of the group acts properly, then every basic form on $M$ descends to a~dif\/feological form on~$M/G$.
This fact is non-trivial even when the group~$G$ is f\/inite.

\item
The quotient $M/G$ can be non-Hausdorf\/f. Nevertheless, even if its topology
is trivial, $M/G$ may have non-trivial dif\/ferential forms,
and its de Rham cohomology may be non-trivial.
See, for example, the irrational torus in~\hypref{rk:irrational torus}{Remark}.

\item
We expect
the conclusion of Part~(ii) of Theorem~\ref{t:main} to hold
under more general hypotheses.
In particular, our assumption that the identity component of $G$
act properly on $M$ is suf\/f\/icient but not necessary;
see, for instance, \hypref{x:irrational solenoid}{Example}.

\end{enumerate}
\end{Remark}

Dif\/feology was developed by Jean-Marie Souriau (see \cite{souriau})
around 1980, following earlier work of Kuo-Tsai Chen
(see, e.g., \cite{chen1,chen4}).
Our primary reference for this theory is the book~\cite{iglesias}
by Iglesias-Zemmour.
In many applications, dif\/feology can serve as a replacement
for the manifold structures
(modelled on locally convex topological vector spaces)
on spaces of smooth paths, functions, or dif\/ferential forms.
See, for example, \cite{BFW}.

The category of dif\/feological spaces is complete and co-complete
(see \cite{BH}); in particular, subsets and quotients
naturally inherit dif\/feological structures.
It is also Cartesian closed, where we equip spaces of smooth maps
with their natural functional dif\/feology.

It is also common to consider $M/G$ as a \emph{$($Sikorski$)$ differential space},
by equipping it with the set of those real valued functions
whose pullback to $M$ is smooth.
See Hochschild~\cite{hochschild}, Bredon~\cite{bredon},
G.~Schwarz~\cite{schwarz}, and Cushman and \'Sniatycki~\cite{CuSn}.
This structure is determined by the dif\/feology on~$M/G$ but is weaker.
For example, the quotients~$\RR^n/\SO(n)$
for dif\/ferent positive integers $n$ are isomorphic as dif\/ferential spaces
but not as dif\/feological spaces.
(See Exercise~50 of Iglesias~\cite{iglesias}
with solution at the end of the book.)
There are several inequivalent notions of ``dif\/ferential form''
on dif\/ferential spaces
(see \'Sniatycki~\cite{sniatycki} and Watts~\cite{watts-masters});
we do not know of an analogue of \hypref{t:main}{Theorem}
for any of these notions.

Turning to higher category theory,
we can also consider the stack quotient $[M/G]$.
It is a dif\/ferentiable stack over the site of manifolds,
and it is represented by the action groupoid $G\times M\toto M$.
One can def\/ine a dif\/ferential $k$-form on the stack $[M/G]$
as a map of stacks from $[M/G]$
to the stack of dif\/ferential forms $\Omega^k$.
In~\cite{WW},
Watts and Wolbert def\/ine a functor $\mathbf{Coarse}$ from stacks
to dif\/feological spaces for
which $\mathbf{Coarse}([M/G])$ is equal to $M/G$ equipped with the
quotient dif\/feology. In this language,
\hypref{t:main}{Theorem} gives an isomorphism
\begin{gather*} \Omega^k([M/G])\cong \Omega^k(\mathbf{Coarse}([M/G])), \end{gather*}
when the identity component of $G$ acts properly.
We note that
a dif\/feological space can also be viewed as a stack,
and applying $\mathbf{Coarse}$ recovers the original dif\/feological space.
We also note that
the quotient stack $[M/G]$ often contains more information than
the quotient dif\/feology;
for example, if~$M$ is a point, the quotient dif\/feological space is a point,
but the stack $[M/G]$ determines the group~$G$.
Finally, we note that
Karshon and Zoghi~\cite{zoghi} give suf\/f\/icient conditions
for a Lie groupoid to be determined up to Morita equivalence
by its underlying dif\/feological space.

In the special case that the Lie group $G$ is compact,
the main results of this paper appeared in the Ph.D.\ Thesis of the
second author~\cite{watts}, supervised by the f\/irst author.
A generalisation to proper Lie groupoids, which relies on these results and on
a deep theorem of Crainic and Struchiner~\cite{CrSt}
showing that all proper Lie groupoids are linearisable,
was worked out by Watts in~\cite{watts-groupoids}.

The paper is structured as follows.
For the convenience of the reader,
Section~\ref{sec:diffeology} contains background on dif\/feology, dif\/ferential forms on dif\/feological spaces, and Lie group actions in connection to dif\/feology.
Section~\ref{sec:injects} contains a proof that the pullback map
from the space of dif\/ferential forms on the orbit space $M/G$
to the space of dif\/ferential forms on the manifold $M$
is an injection into the space of basic forms, and is a dif\/feomorphism onto its image.
Section~\ref{sec:q in stages} contains a technical lemma: ``quotient
in stages''.
Section~\ref{sec:surjects} is the technical heart of the paper;
it contains a proof that, if the identity component of the group acts properly,
then the pullback map surjects onto the space of basic forms.

Our appendices contain two applications
of the special case of \hypref{t:main}{Theorem}
when the group~$G$ is f\/inite.
In Appendix~\ref{sec:orbifolds} we show that, on an orbifold,
the notion of a dif\/feological dif\/ferential form
agrees with the usual notion of a dif\/ferential form on the orbifold.
In Appendix~\ref{sec:sjamaar} we show that, on a regular symplectic quotient
(which is also an orbifold),
the notion of a dif\/feological dif\/ferential form also agrees
with Sjamaar's notion of a dif\/ferential form on the symplectic quotient.
The case of non-regular symplectic quotients is open.

\section{Background on dif\/feological spaces}\label{sec:diffeology}

In this section we review the basics of dif\/feology, dif\/feological dif\/ferential forms, and Lie group actions in the context of dif\/feology. For more details (e.g., the quotient dif\/feology is in fact a~dif\/feology), see Iglesias-Zemmour~\cite{iglesias}.

\subsection*{The basics of dif\/feology}

This subsection contains a review of the basics of dif\/feology; in particular, the def\/inition of a~dif\/feology and dif\/feologically smooth maps, as well as various constructions in the dif\/feological category.

\begin{Definition}[dif\/feology]
Let $X$ be a set. A \emph{parametrisation} on $X$ is a
function $p \colon U \to X$ where $U$ is an open subset of $\RR^n$
for some $n$.
A \emph{diffeology} $\mathcal{D}$ on $X$ is a set of parametrisations
that satisf\/ies the following three conditions.

\begin{enumerate}\itemsep=0pt
\item (\emph{Covering})
For every point $x \in X$ and every non-negative integer $n \in \NN$,
the constant function $p \colon \RR^n \to \{ x \} \subseteq X$
is in $\mathcal{D}$.
\item (\emph{Locality})
Let $p \colon U \to X$ be a parametrisation such that
for every point in $U$ there exists an open neighbourhood $V$ in $U$
such that $p|_V \in \mathcal{D}$. Then $p\in\mathcal{D}$.
\item (\emph{Smooth compatibility})
Let $(p \colon U\to X) \in \mathcal{D}$.
Then for every $n \in \NN$, every open subset $V \subseteq \RR^n$,
and every smooth map $F \colon V \to U$, we have $p \circ F \in \mathcal{D}$.
\end{enumerate}

A set $X$ equipped with a dif\/feology $\mathcal{D}$ is called
a \emph{diffeological space} and is denoted by $(X,\mathcal{D})$.
When the dif\/feology is understood, we drop the symbol $\mathcal{D}$.
The elements of $\calD$ are called \emph{plots}.
\end{Definition}

\begin{Example}[standard dif\/feology on a manifold]
Let $M$ be a manifold. The \emph{standard diffeology} on $M$ is the
set of all smooth maps to $M$ from open subsets of $\RR^n$
for all $n \in \NN$.
\end{Example}

\begin{Definition}[dif\/feologically smooth maps]
Let $X$ and $Y$ be two dif\/feological spaces,
and let $F \colon X\to Y$ be a map. We say that $F$ is
\emph{(diffeologically) smooth} if for any plot $p \colon U \to X$ of $X$
the composition $ F \circ p \colon U \to Y$ is a plot of $Y$.
Denote by $\CIN(X,Y)$ the set of all smooth maps
from $X$ to $Y$.
Denote by $\CIN(X)$ the set of all smooth maps from $X$ to~$\RR$,
where $\RR$ is equipped with its standard dif\/feology.
\end{Definition}

\begin{Remark}[plots]
A parametrisation is dif\/feologically smooth if and only if it is a plot.
\end{Remark}

\begin{Remark}[smooth maps between manifolds]
A map between two manifolds is dif\/feologically smooth
if and only if it is smooth in the usual sense.
In particular, if $M$ is a manifold then $\CIN(M)$ is the usual set
of smooth real valued functions.
\end{Remark}

\begin{Remark}
Dif\/feological spaces, along with dif\/feologically smooth maps,
form a category. It is shown in \cite[Theorem~3.2]{BH} that this category
is a complete and cocomplete quasi-topos.
In particular, it is closed under passing to arbitrary quotients,
subsets, function spaces, products, and coproducts.
\end{Remark}

\begin{Definition}[quotient dif\/feology]
Let $X$ be a dif\/feological space,
and let $\sim$ be an equivalence relation on $X$.
Let $Y = X/\!\!\sim$ be the quotient set,
and let $\pi \colon X\to Y$ be the quotient map.
We def\/ine the \emph{quotient diffeology} on $Y$
to be the dif\/feology for which the plots are those maps $p \colon U\to Y$
such that for every point in $U$
there exist an open neighbourhood $V \subseteq U$
and a plot $q \colon V\to X$ such that $p|_V = \pi \circ q$.
\end{Definition}

\begin{Remark}[quotient map]
Let $X$ be a dif\/feological space and $\sim$ an equivalence relation on~$X$.
Then the quotient map $\pi \colon X \to X / \!\!\sim$ is smooth.
\end{Remark}

A special case that is important to us is the quotient
of a manifold by the action of a Lie group.

\begin{Definition}[subset dif\/feology]
Let $X$ be a dif\/feological space, and let~$Y$ be a subset
of~$X$. The \emph{subset diffeology} on~$Y$ consists
of those maps to~$Y$ whose composition with the inclusion map $Y \to X$
are plots of~$X$.
\end{Definition}

\begin{Definition}[product dif\/feology]
Let $X$ and $Y$ be two dif\/feological spaces.
The \emph{product diffeology} on the set $X \times Y$ is def\/ined as follows.
Let $\operatorname{pr}_X \colon X \times Y \to X$
and $\operatorname{pr}_Y \colon X \times Y \to Y$ be the natural projections.
A parametrisation $p \colon U \to X \times Y$ is a plot
if $\operatorname{pr}_X \circ p$ and $\operatorname{pr}_Y \circ p$
are plots of $X$ and $Y$, respectively.
\end{Definition}

\begin{Definition}[standard functional dif\/feology on maps]
\label{d:functional}
Let $Y$ and $Z$ be dif\/feological spaces.
The \emph{standard functional diffeology} on $\CIN(Y,Z)$ is def\/ined as follows.
A parametrisation $p \colon U \to \CIN(Y,Z)$ is a plot if the map
 \begin{gather*} U \times Y\to Z \qquad \text{given by} \quad (u,y)\mapsto p(u)(y) \end{gather*}
is smooth.
\end{Definition}

\subsection*{Dif\/feological dif\/ferential forms}

In this subsection we review dif\/ferential forms on dif\/feological spaces, as well as introduce Proposition~\ref{p:pullbackimgchar} which is a~simple criterion crucial to the proof of Theorem~\ref{t:main}.

\begin{Definition}[dif\/ferential forms]
\label{d:diff forms}
Let $(X,\mathcal{D})$ be a dif\/feological space.
A \emph{$($diffeological$)$ differential $k$-form} $\alpha$ on $X$
is an assignment to each plot $( p \colon U \to X) \in \mathcal{D}$
a dif\/ferential $k$-form $\alpha(p) \in \Omega^k(U)$
satisfying the following \emph{smooth compatibility} condition:
for every open subset $V$ of a~Euclidean space
and every smooth map $F \colon V \to U$,
\begin{gather*} \alpha(p\circ F) = F^* (\alpha(p)) .\end{gather*}
Denote the set of dif\/ferential $k$-forms on $X$ by $\Omega^k(X)$.
\end{Definition}

\begin{Definition}[pullback map]
\label{d:pullback}
Let $X$ and $Y$ be dif\/feological spaces, and let $F \colon X\to Y$
be a dif\/feologically smooth map.
Let $\alpha$ be a dif\/ferential $k$-form on $Y$.
Def\/ine the \emph{pullback} $F^*\alpha$ to be the $k$-form on $X$
that satisf\/ies the following condition:
for every plot $p \colon U \to X$,
\begin{gather*} F^* \alpha(p) = \alpha(F\circ p) .\end{gather*}
\end{Definition}

\begin{Example} \label{e:forms-on-mflds}
Let $\alpha$ be a dif\/ferential form on a manifold $M$.
Then $(p \colon U \to M) \mapsto p^* \alpha$
def\/ines a dif\/feological dif\/ferential form on $M$.
In this way, we get an identif\/ication of the ordinary dif\/ferential forms
on $M$ with the dif\/feological dif\/ferential forms on $M$.

Let $X$ be a dif\/feological space, $\alpha$ a dif\/ferential form on $X$,
and $p \colon U \to X$ a plot.
The above identif\/ication of ordinary dif\/ferential forms on $U$
with dif\/feological dif\/ferential forms on $U$
gives $\alpha(p) = p^* \alpha$.
Henceforth, we may write $p^*\alpha$ instead of $\alpha(p)$.

\end{Example}

\begin{Example} \label{e:Omega0}
The space $\Omega^0(X)$ of dif\/feological $0$-forms
is identif\/ied with the space $\CIN(X)$
of smooth real valued functions,
by identifying the function $f$
with the $0$-form $(p \colon U \to X) \mapsto f \circ p$.
See \cite[Section~6.31]{iglesias}.
With this identif\/ication, the pullback of $0$-forms by a smooth map~$F$
becomes the precomposition of smooth real-valued functions by~$F$.
\end{Example}

\begin{Remark}[pullback is linear]\label{r:linear}
The space of dif\/ferential forms on a dif\/feological space is naturally
a linear vector space:
for $\alpha,\beta \in \Omega^k(X)$ and $a,b \in \RR$,
we def\/ine $a\alpha+b\beta \colon (p \colon U \to X) \mapsto
a \alpha(p) + b \beta(p)$.
If $F \colon X \to Y$ is a smooth map of dif\/feological spaces,
then the pullback map $F^*	 \colon \Omega^k(Y) \to \Omega^k(X)$ is linear.
\end{Remark}

\begin{Remark}[wedge product and exterior derivative]
\label{alg iso}
Let $X$ be a dif\/feological space.
Def\/ine the \emph{wedge product}
of $ \alpha \in \Omega^k(X) $ and $ \beta \in \Omega^l(X) $
to be the $(k+l)$-form
$\alpha \wedge \beta \colon (p \colon U \to X)
 \mapsto p^* \alpha \wedge p^* \beta$.
Def\/ine the \emph{exterior derivative} of $\alpha$
to be the $k+1$ form
$d\alpha \colon (p \colon U \to X) \mapsto d (p^*\alpha)$.
Then $\Omega^*(X)=\bigoplus_{k=0}^\infty\Omega^k(X)$
is a dif\/ferential graded algebra.
In particular, $\Omega^*(X)$ is an exterior algebra,
and $(\Omega^*(X),d)$ is a complex.
If $F \colon X \to Y$ is a smooth map of dif\/feological spaces,
then the pullback map $F^* \colon \Omega^*(Y) \to \Omega^*(X)$
is a morphism of dif\/ferential graded algebras;
in particular,
it intertwines the wedge products and the exterior derivatives.
\end{Remark}

\begin{Definition}[standard functional dif\/feology on forms]
\label{d:D on forms}
Let $(X,\calD)$ be a dif\/feological space.
The \emph{standard functional diffeology} on $\Omega^k(X)$
is def\/ined as follows.
A parametrisation $p \colon U \to \Omega^k(X)$ is a plot
if for every plot $(q \colon V \to X) \in \mathcal{D}$,
where $V$ is open in $\RR^n$, the map $U \times V \to \bigwedge^k \RR^n$
sending $(u,v)$ to $q^*(p(u))|_v$ is smooth.
See \cite[Section~6.29]{iglesias} for a proof that this is indeed a~dif\/feology.
\end{Definition}

\begin{Remark}\label{r:formfacts}
Let $X$ be a dif\/feological space.
We have the following facts.
\begin{enumerate}\itemsep=0pt

\item 
Under the identif\/ication of \hypref{e:Omega0}{Example}
of the space of dif\/feological $0$-forms
with the space of smooth real valued functions,
Def\/initions~\ref{d:functional} and~\ref{d:D on forms}
of the standard functional dif\/feology on these spaces agree.

\item \label{f:pullback is sm}
If $F \colon X \to Y$ is a smooth map to another dif\/feological space,
then the pullback map $F^* \colon \Omega^k(Y) \to \Omega^k(X)$
is smooth with respect to the standard functional dif\/feologies
on the sets of dif\/ferential forms.
See \cite[Section~6.32]{iglesias}.

\item
The exterior derivative
$d \colon \Omega^k(X)\to\Omega^{k+1}(X)$ and the wedge product
$ \Omega^k (X) \times \Omega^l (X) \to \Omega^{k+l} (X)$ are smooth.
See Sections~6.34 and~6.35 of~\cite{iglesias}.
\end{enumerate}
\end{Remark}

\begin{Proposition}[pullbacks of quotient dif\/feological forms]\label{p:pullbackimgchar}
Let $G$ be a Lie group, acting on a manifold $M$,
and let $\pi \colon M \to M/G$ be the quotient map.
Then a differential form $\alpha$ on $M$ is in the image of $\pi^*$
if and only if, for every two plots $p_1 \colon U\to M$
and $p_2 \colon U \to M$ such that $\pi \circ p_1 = \pi \circ p_2$, we have
\begin{gather*} p_1^* \alpha = p_2^* \alpha. \end{gather*}
\end{Proposition}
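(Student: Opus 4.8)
The plan is to prove the two implications separately. Throughout I fix the degree $k$ and treat $\alpha$ as an element of $\Omega^k(M)$; since $\pi^*$ respects the grading (Remark~\ref{alg iso}), the general statement follows by arguing in each degree. I also use the identification of Example~\ref{e:forms-on-mflds}, under which $\alpha(p) = p^*\alpha$ for every plot $p$ of the manifold $M$. For the \emph{necessity}, suppose $\alpha = \pi^*\beta$ for some $\beta \in \Omega^k(M/G)$, and let $p_1,p_2\colon U \to M$ be plots with $\pi \circ p_1 = \pi \circ p_2$. By Definition~\ref{d:pullback},
\begin{gather*} p_i^*\alpha = \alpha(p_i) = (\pi^*\beta)(p_i) = \beta(\pi \circ p_i), \qquad i = 1,2. \end{gather*}
Since $\pi \circ p_1 = \pi \circ p_2$, the two right-hand sides coincide, so $p_1^*\alpha = p_2^*\alpha$. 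This direction uses only the definition of the pullback and nothing about the group action.

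For the \emph{sufficiency}, I assume the stated compatibility condition and construct $\beta$ with $\pi^*\beta = \alpha$. Given a plot $P\colon U \to M/G$, I define $\beta(P) \in \Omega^k(U)$ by lifting $P$ locally through $\pi$: by the definition of the quotient diffeology, each point of $U$ has an open neighbourhood $V$ carrying a smooth map $q\colon V \to M$ with $P|_V = \pi \circ q$, and I set $\beta(P)|_V := q^*\alpha$. The crucial point is well-definedness. If $q_1\colon V_1 \to M$ and $q_2\colon V_2 \to M$ are two such lifts, then on $V_1 \cap V_2$ their compositions with $\pi$ both equal $P$, so $\pi \circ (q_1|_{V_1\cap V_2}) = \pi \circ (q_2|_{V_1\cap V_2})$, and the hypothesis forces $q_1^*\alpha = q_2^*\alpha$ there. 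Thus the locally defined forms agree on overlaps, and since differential forms on the manifold $U$ form a sheaf they glue to a single global form $\beta(P) \in \Omega^k(U)$.

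It then remains to check that $\beta$ is a diffeological form and that $\pi^*\beta = \alpha$, both of which are routine. The smooth compatibility condition of Definition~\ref{d:diff forms} is local: for a smooth map $F\colon V' \to U$ and a local lift $q$ of $P$ over $V \subseteq U$, one has $P \circ F = \pi \circ (q \circ F)$ on $F^{-1}(V)$, so by construction and functoriality of the ordinary pullback $\beta(P \circ F) = (q\circ F)^*\alpha = F^*(q^*\alpha) = F^*(\beta(P))$ there; as this holds near every point, $\beta(P\circ F) = F^*\beta(P)$, whence $\beta \in \Omega^k(M/G)$. Finally, for any plot $q\colon V \to M$ of the manifold, taking $q$ itself as a global lift of the plot $\pi \circ q$ gives $(\pi^*\beta)(q) = \beta(\pi \circ q) = q^*\alpha = \alpha(q)$, and since the two diffeological forms on $M$ agree on every plot they are equal. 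The sole place the hypothesis enters, and the only real content, is the well-definedness of $\beta(P)$, i.e.\ the independence of $q^*\alpha$ from the chosen local lift $q$; everything else is the sheaf property of forms on a manifold together with functoriality of pullback, so I anticipate no further obstacle.
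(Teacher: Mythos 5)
Your proof is correct, and it is complete in a way the paper's is not: the paper disposes of this proposition in one line, by declaring it a special case of the general statement in Iglesias-Zemmour's book (Section~6.38, on forms and subductions), whereas you reconstruct the underlying argument from scratch. Your reconstruction matches the content of that citation: the necessity direction is pure definition-chasing, and the sufficiency direction builds $\beta$ by local lifting through $\pi$, with the hypothesis entering exactly once, to make the locally defined pullbacks independent of the chosen lift, after which the sheaf property of forms on open subsets of $\RR^n$ and functoriality of pullback finish the job. Two features of your argument are worth noting. First, as you observe, it never uses the group action; it only uses that $\pi$ is the quotient map of a diffeological quotient (more generally, the same argument works for any subduction), which is precisely the level of generality at which the cited reference operates. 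Second, your well-definedness step is stated carefully: the two lifts $q_1$, $q_2$ restrict to plots on the \emph{common} domain $V_1 \cap V_2$, which is what the hypothesis (two plots on the same parameter domain with equal compositions with $\pi$) requires; this is the one place where a sloppier write-up could hide a gap, and yours does not. So the trade-off is simply self-containedness versus economy: the paper buys brevity by outsourcing the proof, while your version makes the proposition independent of the reference at the cost of a page of routine but necessary verification.
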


\begin{proof}
This result is a special case of \cite[Section~6.38]{iglesias}.
\end{proof}

\subsection*{Group actions}

Here we highlight some facts about Lie group actions in connection to dif\/feology, as well as review the def\/inition of basic forms. Finally, we introduce another crucial ingredient to the proof of Theorem~\ref{t:main}, the slice theorem (Theorem~\ref{t:slice theorem}). Koszul \cite{koszul} proved the slice theorem for compact Lie group actions, and
Palais \cite{palais} proved it for proper Lie group actions.
The proof is also described in Theorem~2.3.3 of~\cite{DK}
and in Appendix~B of~\cite{GGK}.

\begin{Lemma} \label{facts}
Let a Lie group $G$ act on a manifold $M$.
Let $x$ be a point in $M$ and $H$ its stabiliser. Then
\begin{itemize}\itemsep=0pt
\item
There exists a unique manifold structure on the quotient $G/H$
such that the quotient map $G \to G/H$ is a submersion.
The standard diffeology on this manifold
agrees with the quotient diffeology induced from $G$.

\item
There exists a unique manifold structure on the orbit $G \cdot x$
such that the inclusion map $G \cdot x \to M$ is an immersion.
The standard diffeology on this manifold agrees with the subset diffeology
induced from~$M$.

\item
The orbit map $a \mapsto a \cdot x$ from $G$ to $M$
descends to a diffeomorphism from $G/H$ to $G \cdot x$.

\item
The tangent space $T_x(G \cdot x)$
is the space of vectors $\xi_M|_x$ for $\xi \in \g$,
where $\xi_M$ is the vector field on $M$ that is induced
by the Lie algebra element $\xi$.
This space is also the image of the differential at the identity
of the orbit map $a \mapsto a \cdot x$ from $G$ to $M$.
\end{itemize}
\end{Lemma}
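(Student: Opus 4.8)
The four assertions are standard facts about Lie group actions, and I would organise the proof around the orbit map $\phi\colon G\to M$, $a\mapsto a\cdot x$, whose image is $G\cdot x$ and whose fibres are the cosets of $H$. Since $H=\phi^{-1}(x)$ is closed, the quotient manifold theorem supplies a unique smooth structure on $G/H$ for which the projection $q\colon G\to G/H$ is a submersion; uniqueness holds because any two such structures make the identity map smooth in both directions. For the diffeological half of the first bullet, the key point is that a submersion admits smooth local sections. A parametrisation $p\colon U\to G/H$ is a plot of the quotient diffeology exactly when it lifts locally through $q$ to a smooth map into $G$; composing such a lift with $q$ shows a plot of the quotient diffeology is smooth into the manifold $G/H$, while composing a smooth $p$ with local sections of $q$ produces the required local lifts. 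Hence the quotient diffeology and the standard diffeology coincide.

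Next I would pass from $G/H$ to the orbit. Equivariance of $\phi$ (it intertwines left translation on $G$ with the $G$-action on $M$, both by diffeomorphisms) shows that $d\phi_a$ has rank independent of $a$, so $\phi$ has constant rank and descends to an injective immersion $\bar\phi\colon G/H\to M$ with image $G\cdot x$. Transporting the manifold structure of $G/H$ along the bijection $\bar\phi$ yields a manifold structure on $G\cdot x$ for which the inclusion is an immersion and for which $\bar\phi$ is, by construction, a diffeomorphism $G/H\to G\cdot x$; this settles the third bullet. The tangent-space assertion is then a direct computation: $d\phi_e(\xi)=\tfrac{d}{dt}\big|_{t=0}\exp(t\xi)\cdot x=\xi_M|_x$, so $\image d\phi_e=\{\xi_M|_x:\xi\in\g\}$, and since $q$ is a submersion and $\bar\phi$ an immersion onto the orbit, this image is exactly $T_x(G\cdot x)$.

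The main obstacle is the remaining diffeological claim in the second bullet: that the standard diffeology on $G\cdot x$ agrees with the subset diffeology induced from $M$, even when the orbit is not embedded (as for a dense line on a torus). One inclusion is immediate, since the inclusion $G\cdot x\hookrightarrow M$ is smooth. The reverse inclusion is precisely the statement that the orbit is a weakly embedded (initial) submanifold: any parametrisation $p\colon U\to M$ whose image lies in $G\cdot x$ is smooth as a map into $G\cdot x$. I would argue this locally, using the constant-rank normal form of $\phi$ to present $\bar\phi$ near a point as an embedded slice in a coordinate neighbourhood; the delicate part is to rule out that $p$ jumps among the parallel slices meeting that neighbourhood, which fails for a general injective immersion such as the figure-eight but holds here because $G\cdot x$ is an integral manifold of the (possibly singular) distribution spanned by the fundamental vector fields $\xi_M$, and hence is invariant under their flows. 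I would invoke the Stefan--Sussmann description of such integral manifolds, or equivalently cite the treatment of orbits as initial submanifolds in~\cite{DK,GGK}, to conclude that $p$ factors smoothly through the orbit. The uniqueness of the immersed structure then follows, since the initial-submanifold property characterises smooth maps into $G\cdot x$ intrinsically, forcing any competing immersed structure to be diffeomorphic to this one.
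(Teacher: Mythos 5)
Your proposal is mathematically sound, but note that the paper does not actually prove this lemma: its entire proof is the citation ``See \cite[Section~2, Paragraph~1]{IK}'', so these facts are treated as imported background rather than argued. What you do differently is reconstruct the argument, and you correctly isolate the one genuinely delicate point: that the subset diffeology on a possibly non-embedded, possibly dense orbit agrees with the manifold diffeology of $G/H$, i.e., that orbits are weakly embedded (initial) submanifolds. That is indeed where the real content lies --- the paper's own \hypref{x:irrational solenoid}{Example} of the dense irrational solenoid in $\TT^2$ shows the claim is not formal --- and your mechanism for it is the standard correct one: orbits are Stefan--Sussmann leaves of the singular distribution spanned by the fundamental vector fields $\xi_M$, second countability of $G$ gives countably many plaques in an adapted chart, and connectedness of (local) plot domains forces the transverse coordinate to be constant, ruling out the figure-eight pathology. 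Two caveats. First, your fallback citations \cite{DK,GGK} are not ideal sources for the initial-submanifold property in the form you need (in \cite{GGK} the actions are proper, so orbits are embedded and the subtlety never arises); the reference the paper itself leans on is \cite{IK}, and the Stefan--Sussmann orbit theorem or Michor's treatment of initial submanifolds are the natural external ones. Second, in your uniqueness argument initiality gives slightly more than ``any competing immersed structure is diffeomorphic to this one'': it makes the identity map from the competing structure smooth, hence a bijective immersion, and since such a map is a diffeomorphism once dimensions agree (they must, because a smooth image of a lower-dimensional manifold cannot exhaust the orbit), the two structures are literally equal --- which is the uniqueness being asserted. In sum: the paper's citation is the economical choice for standard background; your sketch makes the lemma self-contained and makes visible exactly which bullet requires more than the constant-rank theorem.
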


\begin{proof}
See \cite[Section~2, Paragraph~1]{IK}.
\end{proof}

\begin{Definition}[basic forms] \label{d:basic form}
Let $G$ be a Lie group acting on a manifold $M$.
A dif\/ferential form $\alpha$ on $M$ is \emph{horizontal}
if for any $x \in M$ and $v \in T_x(G\cdot x)$ we have
\begin{gather*} v \hook \alpha = 0.\end{gather*}
(Recall that $v\hook\alpha=\alpha(v,\cdot,\dots,\cdot)$.)
A form that is both horizontal and $G$-invariant is called \emph{basic}.
When the $G$-action is understood,
we denote the set of basic $k$-forms on $M$ by $\Omega^k_{\rm basic}(M)$.
\end{Definition}

\begin{Remark} \label{r:wedge and d of basic}
The space of basic dif\/ferential forms on a $G$-manifold $M$
is closed under linear combinations, wedge products,
and exterior derivatives.
\end{Remark}

\begin{Remark}
Given a quotient map $\pi \colon X \to X'$ of dif\/feological spaces,
(more generally, given a so-called subduction,)
Iglesias-Zemmour~\cite[Section~6.38]{iglesias}
def\/ines a~``basic form'' to be a~dif\/ferential form on~$X$
that satisf\/ies the technical condition that appears in
Proposition~\ref{p:pullbackimgchar} above.
Our results show that, for smooth Lie group actions
where the identity component acts properly,
Iglesias-Zemmour's def\/inition agrees with the usual one.
\end{Remark}

Let $G$ be a Lie group, $H$ a closed subgroup, and $V$ a vector space
with a linear $H$-action. The equivariant vector bundle
\begin{gather*} G \times_H V \end{gather*}
over $G/H$ is obtained as the quotient of $G \times V$
by the anti-diagonal $H$-action $h \cdot (g,v) = (g h^{-1} , h \cdot v)$.
The $G$-action on $G\times_H V$ is $g\cdot[g',v]=[gg',v]$.

\begin{Theorem}[slice theorem]
\label{t:slice theorem}
Let $G$ be a Lie group acting properly on a manifold $M$. Fix $x\in M$.
Let $H$ be the stabiliser of $x$, and let $V = T_x M / T_x(G\cdot x)$
be the normal space to the orbit $G\cdot x$ at $x$,
equipped with the linear $H$-action that is induced
by the linear isotropy action of $H$ on $T_xM$.
Then there exist a $G$-invariant open neighbourhood $U$ of $x$
and a $G$-equivariant diffeomorphism $F\colon U\to G\times_H V$
that takes~$x$ to~$[1,0]$.
\end{Theorem}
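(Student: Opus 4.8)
The plan is to realise the model $G\times_H V$ as a $G$-invariant tubular neighbourhood of the orbit $G\cdot x$, built from the Riemannian exponential map of a $G$-invariant metric. I work throughout in the category of smooth manifolds; the agreement of the resulting structures with the diffeological ones is supplied by Lemma~\ref{facts}.

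First I would extract the two consequences of properness that drive the construction. Since the action is proper, the stabiliser $H=G_x$ is compact, and $M$ carries a $G$-invariant Riemannian metric $g$. The latter is produced by averaging an arbitrary metric $g_0$ against a Haar measure on $G$, using a cut-off function $f\colon M\to\RR_{\geq 0}$ whose support meets every orbit in a compact set and which satisfies $\int_G f(a\cdot m)\,da=1$ for all $m$; properness is exactly what guarantees that such an $f$ exists and that the averaged integral converges to a smooth $G$-invariant metric.

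Next I would identify the normal bundle. By Lemma~\ref{facts} the orbit $G\cdot x$ is a submanifold $G$-equivariantly diffeomorphic to $G/H$, with $T_x(G\cdot x)=\{\xi_M|_x:\xi\in\g\}$. Let $N\to G\cdot x$ be the $g$-orthogonal normal bundle, a $G$-equivariant vector bundle whose fibre over $x$ is $V=T_x(G\cdot x)^\perp\cong T_xM/T_x(G\cdot x)$, equipped with the linear isotropy action of $H$. Translating normal vectors over $x$ by the $G$-action and quotienting by the induced $H$-action identifies $N$ with $G\times_H V$ as $G$-equivariant bundles, the zero section corresponding to the orbit and the zero vector at $x$ to $[1,0]$. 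I would then invoke the Riemannian exponential map: because $g$ is $G$-invariant, $\exp^g$ is $G$-equivariant, so $\Phi\colon N\to M$, $(p,v)\mapsto\exp^g_p(v)$, is a $G$-equivariant smooth map restricting to the identity on the zero section, and it is a diffeomorphism from some neighbourhood of the zero section onto a neighbourhood of the orbit.

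The main obstacle is to make this tube simultaneously $G$-invariant and globally embedded, since $G\cdot x\cong G/H$ is generally non-compact while the ordinary tubular-neighbourhood theorem only produces a tube locally near $x$. Here properness does the work in two ways. Because $G$ acts on the orbit transitively and by isometries, the injectivity radius of $\exp^g$ is constant along the orbit, so a single $H$-invariant radius $\epsilon>0$ chosen over the fibre $V$ works uniformly; and properness forces the orbit to be an embedded submanifold, which is what rules out the tube wrapping onto itself and makes $\Phi$ injective rather than merely an immersion on the ball bundle. Spreading the $H$-invariant ball $\{v\in V:\|v\|<\epsilon\}$ by the $G$-action—consistently, since $\Phi$ is $G$-equivariant—yields a $G$-invariant set $\mathcal{O}=G\times_H\{v\in V:\|v\|<\epsilon\}$ on which $\Phi$ is a $G$-equivariant diffeomorphism onto a $G$-invariant open neighbourhood $U$ of $x$. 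Composing $\Phi^{-1}$ with a fixed $H$-equivariant diffeomorphism from the ball $\{v\in V:\|v\|<\epsilon\}$ onto $V$ (acting fibrewise and fixing $0$) then gives the required $G$-equivariant diffeomorphism $F\colon U\to G\times_H V$ with $F(x)=[1,0]$.
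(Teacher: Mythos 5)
Your proposal is correct and follows what is, in effect, the paper's own route: the paper does not prove Theorem~\ref{t:slice theorem} but imports it from Koszul~\cite{koszul} and Palais~\cite{palais}, pointing to the expositions in \cite[Theorem~2.3.3]{DK} and \cite[Appendix~B]{GGK}, whose argument is exactly yours --- a $G$-invariant metric obtained by averaging against a cut-off function (existence of which uses properness), the $G$-equivariant normal exponential map on $N\cong G\times_H V$, and a uniform tube radius along the orbit. The one step you state more briefly than it deserves is the injectivity of the exponential on the $\epsilon$-ball bundle: neither constancy of the injectivity radius nor embeddedness of the orbit \emph{alone} rules out the tube overlapping itself, and one must combine them --- e.g., if $\Phi(p_n,v_n)=\Phi(q_n,w_n)$ with distinct arguments and $|v_n|,|w_n|<1/n$, translate by the group so that $p_n=x$, observe that then $d_M(q_n,x)\le |v_n|+|w_n|\to 0$, use embeddedness (which properness supplies) to conclude $q_n\to x$ in the manifold topology of the orbit, and contradict the local tubular neighbourhood theorem over a fixed compact neighbourhood of $x$.
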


\section{The pullback injects into basic forms}\label{sec:injects}

In this section we prove the easy part of Theorem~\ref{t:main}:
for a Lie group $G$ acting on a manifold $M$
with quotient map $\pi \colon M \to M/G$,
the pullback map $\pi^*$ is an injection
from the set of dif\/feological dif\/ferential forms on $M/G$
into the set of basic forms on $M$,
and $\pi^*$ is a dif\/feomorphism onto its image.

The following lemma is a special case of~\cite[Section~6.39]{iglesias}.

\begin{Lemma} \label{l:pullback is injection}
Let a Lie group $G$ act on a manifold $M$,
and let $\pi \colon M \to M/G$ be the quotient map.
Then the pullback map on forms,
$\pi^* \colon \Omega^k(M/G) \to \Omega^k(M)$, is an injection.
\end{Lemma}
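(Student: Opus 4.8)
The plan is to show directly that the only diffeological form $\alpha$ on $M/G$ with $\pi^*\alpha = 0$ is the zero form. So suppose $\pi^*\alpha = 0$, and let $p \colon U \to M/G$ be an arbitrary plot; I must show that $\alpha(p) = 0 \in \Omega^k(U)$. Since an ordinary differential form on the open set $U \subseteq \RR^n$ vanishes as soon as it vanishes on the members of an open cover of $U$, it suffices to show that $\alpha(p)$ restricts to zero on a neighbourhood of each point of $U$.

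The key structural input is the definition of the quotient diffeology. By that definition, for each point $u \in U$ there exist an open neighbourhood $V \subseteq U$ of $u$ and a plot $q \colon V \to M$ (that is, an ordinary smooth map into the manifold $M$) such that $p|_V = \pi \circ q$. Thus every plot of $M/G$ factors, locally, through a plot of $M$ via $\pi$; this is what makes the argument run.

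It then remains to combine this factorisation with the smooth compatibility condition in the definition of a diffeological form. Writing $\iota_V \colon V \hookrightarrow U$ for the inclusion, smooth compatibility gives $\alpha(p)|_V = \iota_V^* \alpha(p) = \alpha(p \circ \iota_V) = \alpha(p|_V) = \alpha(\pi \circ q)$. By the definition of the pullback, $\alpha(\pi \circ q) = (\pi^*\alpha)(q)$, which is zero by hypothesis. Hence $\alpha(p)$ vanishes on each such $V$, so it vanishes on all of $U$; as $p$ was arbitrary, $\alpha = 0$, and $\pi^*$ is injective.

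The only point requiring care --- and the closest thing to an obstacle --- is that the factorisation $p|_V = \pi \circ q$ holds only locally on $U$ and not globally, since a global smooth section of $\pi$ need not exist. This is precisely why one passes through the restriction $\alpha(p)|_V$ and invokes the fact that a form vanishing on each member of an open cover vanishes everywhere, rather than attempting to write $\alpha(p)$ itself as a single pullback. Beyond this, the argument uses nothing about the group action except that $\pi$ is the quotient map, and it is uniform in the degree $k$.
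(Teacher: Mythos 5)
Your proof is correct and is essentially the paper's own argument: both reduce injectivity to showing the kernel of $\pi^*$ is trivial (which tacitly uses linearity of $\pi^*$, Remark~\ref{r:linear}), and both conclude $\alpha(p)=0$ by locally factoring each plot of the quotient through $\pi$ via the definition of the quotient diffeology. The only cosmetic difference is that the paper states the result for an arbitrary quotient $X/\!\!\sim$ of a diffeological space, a generality your argument also delivers, as you yourself observe.
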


\begin{proof}
More generally, let $X$ be a dif\/feological space, $\sim$ an equivalence
relation on $X$, and $\pi \colon X \to X/\!\!\sim$ the quotient map.
Then the pullback map on forms,
$\pi^* \colon \Omega^k(X/\!\!\sim) \to \Omega^k(X)$, is an injection.

By \hypref{r:linear}{Remark} it is enough to show that
the kernel of the pullback map
\begin{gather*}\pi^* \colon \ \Omega^k(X/\!\!\sim) \to \Omega^k(X)\end{gather*}
is trivial.
Let $\alpha \in \Omega^k(X/\!\!\sim)$
be such that $\pi^* \alpha = 0$.
Then, for any plot $p \colon U \to X$, we have $p^* \pi^* \alpha= 0$.
By the def\/inition of the quotient dif\/feology, this implies that
for any plot $q \colon U \to X/\!\!\sim$ we have $q^* \alpha = 0$.
Hence, $\alpha = 0$, as required.
\end{proof}

\begin{Proposition}[pullbacks from the orbit space are basic]
\label{p:pullbacksarebasic}
Let a Lie group $G$ act on a~mani\-fold~$M$.
Let $\alpha = \pi^*\beta$ for some $\beta \in \Omega^k(M/G)$.
Then $\alpha$ is basic.
\end{Proposition}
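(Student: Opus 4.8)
The plan is to verify separately that $\alpha = \pi^*\beta$ is $G$-invariant and that it is horizontal; together these two properties say exactly that $\alpha$ is basic. For $k=0$ the horizontality condition is vacuous, so the only content there is $G$-invariance, and I treat $k\geq 1$ for horizontality. For $G$-invariance, let $L_g\colon M\to M$ denote the action of $g\in G$. Since $\pi$ is constant on $G$-orbits, $\pi\circ L_g=\pi$. Hence for every plot $p\colon U\to M$, the definition of the pullback gives
\[ (L_g^*\alpha)(p)=\alpha(L_g\circ p)=\beta(\pi\circ L_g\circ p)=\beta(\pi\circ p)=\alpha(p). \]
Because the plots of the manifold $M$ include its identity charts, a form is determined by its values on plots, so this forces $L_g^*\alpha=\alpha$; thus $\alpha$ is $G$-invariant.

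For horizontality, fix $x\in M$ and $v\in T_x(G\cdot x)$. By \hypref{facts}{Lemma} we may write $v=\xi_M|_x=\frac{d}{dt}\big|_{t=0}\exp(t\xi)\cdot x$ for some $\xi\in\g$. The idea is to package the orbit direction $\xi$ together with arbitrary transverse directions into a single plot whose composition with $\pi$ is degenerate along the orbit direction. Concretely, given arbitrary $w_2,\dots,w_k\in T_xM$, choose a plot $q\colon W\to M$ on an open $W\subseteq\RR^{k-1}$ with $q(w_0)=x$ and $dq_{w_0}(\partial_{w_i})=w_{i+1}$, and set
\[ P\colon\ \RR\times W\to M,\qquad P(t,w)=\exp(t\xi)\cdot q(w). \]
Since $\exp(t\xi)$ moves points along their orbits, $\pi\circ P=(\pi\circ q)\circ\pr_W$, where $\pr_W\colon\RR\times W\to W$ is the projection. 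By the definition of the pullback and the smooth compatibility of $\beta$ (Definition~\ref{d:diff forms}),
\[ P^*\alpha=\beta(\pi\circ P)=\pr_W^*\big(\beta(\pi\circ q)\big), \]
which is pulled back along $\pr_W$ and therefore has no $dt$-component: $\partial_t\hook P^*\alpha=0$ because $d\pr_W(\partial_t)=0$. Evaluating at $(0,w_0)$, where $P(0,w_0)=x$, $dP(\partial_t)=v$, and $dP(\partial_{w_i})=w_{i+1}$, yields
\[ \alpha_x(v,w_2,\dots,w_k)=(P^*\alpha)(\partial_t,\partial_{w_2},\dots,\partial_{w_k})=0. \]
As $w_2,\dots,w_k$ were arbitrary, $v\hook\alpha=0$ at $x$, which is horizontality.

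The only genuine subtlety is the horizontality step: one must simultaneously realize prescribed transverse tangent vectors through the auxiliary plot $q$ and flow along the orbit via $\exp(t\xi)$, and then recognize that since $\pi$ collapses the orbit direction the pulled-back form loses its $dt$-component. The $G$-invariance is a routine unwinding of the pullback definition. I would remark that the horizontality argument can equally be organized through \hypref{p:pullbackimgchar}{Proposition}, by comparing $P$ with the $t$-independent plot $(t,w)\mapsto q(w)$, which has the same composition with $\pi$; the characterization there gives directly that $P^*\alpha$ agrees with $\pr_W^*\big(q^*\alpha\big)$ and hence kills $\partial_t$.
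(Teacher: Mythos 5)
Your proof is correct. The invariance half coincides with the paper's (both unwind $g^*\pi^*\beta=(\pi\circ g)^*\beta=\pi^*\beta$, yours spelled out plot-by-plot), but your horizontality argument takes a genuinely different route, and it is the more complete one. The paper reduces horizontality to the claim that the orbit maps $A_x\colon G\to M$, $g\mapsto g\cdot x$, satisfy $A_x^*\alpha=0$, which it proves by factoring $\pi\circ A_x$ through a one-point space. However, $A_x^*\alpha=0$ only controls $\alpha$ on $k$-tuples of vectors that are \emph{all} tangent to the orbit, whereas horizontality $v\hook\alpha=0$ requires the remaining $k-1$ slots to range over all of $T_xM$; so for $k\geq 2$ the paper's reduction step is not a formal implication (for instance, $\alpha=dx_1\wedge dx_2$ on $\RR^2$ with $\RR$ translating $x_1$ has $A_x^*\alpha=0$ for every $x$ but is not horizontal; that form is of course not a pullback, so the Proposition itself is unharmed). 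Your plot $P(t,w)=\exp(t\xi)\cdot q(w)$ is precisely the device that fills this in: it realizes the orbit direction and arbitrary transverse vectors $w_2,\dots,w_k$ in a single plot, and the identity $\pi\circ P=(\pi\circ q)\circ\pr_W$ together with smooth compatibility of $\beta$ kills the $dt$-slot against all the other slots simultaneously. In effect you have given a localized version of the lemma quoted in the Remark following the Proposition (that $\alpha$ is basic if and only if its pullbacks under the action map and the projection $G\times M\to M$ agree), which is the paper's own suggested alternative proof; your closing observation via \hypref{p:pullbackimgchar}{Proposition}, comparing $P$ with the $t$-independent plot $(t,w)\mapsto q(w)$, makes that connection explicit. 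What your route buys is an airtight, self-contained treatment of the transverse directions; what the paper's route buys is brevity, at the cost of a reduction step that, read literally, is only justified for $1$-forms.
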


\begin{proof}
To show that $\alpha$ is $G$-invariant, note that for every $g \in G$, because $\pi\circ g=\pi$,
\begin{gather*}
g^* \alpha = g^* \pi^* \beta = \pi^* \beta = \alpha.
\end{gather*}

If $\alpha$ is a zero-form (that is, a smooth function)
then $\alpha$ is automatically horizontal and we are done.
Next, we assume that $\alpha$ is a dif\/ferential form of positive degree,
and we show that $\alpha$ is horizontal.
By Lemma~\ref{facts}, if $x\in M$ and $v\in T_x(G\cdot x)$, then there exists
$\xi\in\g$ such that
\begin{gather*}v=\frac{d}{dt}\Big|_{t=0}\exp(t\xi) \cdot x .\end{gather*}
Let $A_x\colon G\to M$ be the map sending $g$ to $g\cdot x$. Then, $v=(A_x)_*(\xi)$.

Thus, to show that $\alpha$ is horizontal, it is enough to show that $A_x^*\alpha=0$ for all $x\in M$. Indeed, it follows from the following commutative diagram that $A_x^*\alpha=A_x^*\pi^*\beta=0$,
\begin{gather*}
\begin{gathered}[b]
\xymatrix{
 G \ar[rr]^{A_x} \ar[d]_{g \mapsto \star} && M \ar[d]^{\pi} \\
 \{ \star \} \ar[rr]_{\star \mapsto [x]} && M/G
}\\[-17pt]
\null
\end{gathered}\tag*{\qed}
\end{gather*}
 \renewcommand{\qed}{}
\end{proof}

\begin{Remark}
\hypref{p:pullbacksarebasic}{Proposition} can also be deduced
from the following lemma:
a dif\/ferential form $\alpha$ on $M$ is basic
if and only if its pullbacks under the maps $G \times M \to M$
given by the projection $(g,x)\mapsto x$
and by the action $(g,x) \mapsto g\cdot x$ coincide.
For a proof of this lemma see, for example,
\cite[Lemma~3.3]{watts-groupoids}.
\end{Remark}

\begin{Proposition}[pullbacks via quotient maps]
\label{p:injectivity}
Let a Lie group $G$ act on a manifold $M$,
and let $\pi \colon M \to M/G$ be the quotient map.
Then the pullback map
\begin{gather*} \pi^* \colon \ \Omega^k(M/G) \to \pi^*\Omega^k(M/G) \end{gather*}
is a diffeomorphism,
where the target space is equipped with the subset diffeology
induced \linebreak from~$\Omega^k(M)$.
\end{Proposition}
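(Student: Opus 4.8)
The plan is to exploit that, by Lemma~\ref{l:pullback is injection}, the map $\pi^* \colon \Omega^k(M/G) \to \pi^*\Omega^k(M/G)$ is already a bijection onto its image; it then remains only to check that this bijection and its inverse are both dif\/feologically smooth. The smoothness of $\pi^*$ itself is the routine half. By the second part of Remark~\ref{r:formfacts}, the pullback map $\pi^* \colon \Omega^k(M/G) \to \Omega^k(M)$ is smooth for the standard functional dif\/feologies. Since, immediately from the def\/inition of the subset dif\/feology, a map into a subset is smooth precisely when its composition with the inclusion is smooth, the corestriction $\pi^* \colon \Omega^k(M/G) \to \pi^*\Omega^k(M/G)$ is smooth as well.

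The substance lies in showing that the inverse $(\pi^*)^{-1}$ is smooth. I would take a plot $P \colon U \to \pi^*\Omega^k(M/G)$ for the subset dif\/feology; by def\/inition of that dif\/feology, $P$ is in particular a plot of $\Omega^k(M)$. For each $u \in U$ write $P(u) = \pi^*\beta_u$ with $\beta_u \in \Omega^k(M/G)$ the unique preimage, and set $Q = (\pi^*)^{-1}\circ P \colon U \to \Omega^k(M/G)$, so $Q(u) = \beta_u$. The goal is to show that $Q$ is a plot for the standard functional dif\/feology on $\Omega^k(M/G)$. By Def\/inition~\ref{d:D on forms} this means: for every plot $r \colon W \to M/G$ with $W$ open in $\RR^m$, the map $U\times W \to \bigwedge^k\RR^m$ sending $(u,w)$ to $r^*(Q(u))|_w$ is smooth.

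Here I would use that smoothness is a local condition, so it suf\/f\/ices to verify it near an arbitrary point $w_0 \in W$. By the def\/inition of the quotient dif\/feology on $M/G$, there is an open neighbourhood $W' \subseteq W$ of $w_0$ and a plot $q \colon W' \to M$ with $r|_{W'} = \pi\circ q$. For $(u,w) \in U\times W'$, using $\pi^* Q(u) = P(u)$ and functoriality of pullback, one has
\[
r^*(Q(u))|_w = (\pi\circ q)^*(Q(u))|_w = q^*(\pi^*Q(u))|_w = q^*(P(u))|_w .
\]
Because $P$ is a plot of $\Omega^k(M)$ and $q$ is a plot of the manifold $M$, Def\/inition~\ref{d:D on forms} applied to $\Omega^k(M)$ guarantees that $(u,w)\mapsto q^*(P(u))|_w$ is smooth on $U\times W'$. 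Hence $(u,w)\mapsto r^*(Q(u))|_w$ is smooth on a neighbourhood of $U\times\{w_0\}$, and as $w_0$ was arbitrary it is smooth on all of $U\times W$. Thus $Q$ is a plot, $(\pi^*)^{-1}$ is smooth, and $\pi^*$ is a dif\/feomorphism onto its image.

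The one genuinely non-formal point, and what I expect to be the main obstacle, is that a plot of the quotient $M/G$ need not lift globally through $\pi$ to a plot of $M$; it lifts only locally. This is exactly why the verification must be carried out on the neighbourhoods $W'$ and then patched together. The patching is legitimate precisely because the map under scrutiny takes values in the Euclidean space $\bigwedge^k\RR^m$, where smoothness is a local property, so no global lift of $r$ is ever required.
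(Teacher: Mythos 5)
Your proposal is correct and follows essentially the same route as the paper's own proof: injectivity from Lemma~\ref{l:pullback is injection}, smoothness of $\pi^*$ from Remark~\ref{r:formfacts}, and smoothness of the inverse by locally lifting a plot $r$ of $M/G$ through $\pi$ and invoking the functional diffeology on $\Omega^k(M)$, with the locality of smoothness in the Euclidean target justifying the patching. The only cosmetic difference is that the paper states and proves the result for an arbitrary diffeological space modulo an equivalence relation, while you work directly with $M$ and $M/G$; your argument uses nothing beyond that generality anyway.
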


\begin{proof}
More generally,
let $X$ be a dif\/feological space, let $\sim$ be an equivalence relation
on $X$, and let $\pi \colon X \to X/\!\!\sim$ be the quotient map.
We will show that the pullback map
\begin{gather*} \pi^* \colon \ \Omega^k(X/\!\!\sim) \to \pi^*\Omega^k(X/\!\!\sim) \end{gather*}
is a dif\/feomorphism,
where the target space is equipped with the subset dif\/feology
induced \linebreak from~$\Omega^k(X)$.

Clearly, $\pi^*$ is surjective to its image.
For the injectivity of~$\pi^*$, see \hypref{l:pullback is injection}{Lemma}.
As noted in Part~\eqref{f:pullback is sm} of \hypref{r:formfacts}{Remark},
$\pi^*$ is smooth.
We wish to show that the inverse map
\begin{gather*} (\pi^*)^{-1} \colon \ \pi^*\Omega^k(X/\!\!\sim) \to \Omega^k(X/\!\!\sim) \end{gather*}
is also smooth.

Fix a plot $p \colon U \to \Omega^k(X)$
with image in $\pi^*\Omega^k(X/\!\!\sim)$.
We would like to show
that $(\pi^*)^{-1} \circ p \colon U \to \Omega^k(X/\!\!\sim)$
is a~plot of $\Omega^k(X/\!\!\sim)$.
By \hypref{d:D on forms}{Def\/inition} of the dif\/feology
on spaces of dif\/ferential forms,
we need to show, given any plot $r \colon W \to X/\!\!\sim$
with $W \subset \RR^n$, that
\begin{gather*} (u,w)\mapsto r^* \big( \big( (\pi^*)^{-1} \circ p \big) (u) \big)\big|_w \end{gather*}
is a map to $\bigwedge^k \RR^n$ that is smooth in $(u,w) \in U \times W$.
Here, $(\pi^*)^{-1}$ is restricted to the image of~$\pi^*$,
on which it is well def\/ined because~$\pi^*$ is injective.

It is enough to show smoothness locally.
For any point $w\in W$ there exist an open neighbourhood $V\subseteq W$ of $w$
and a plot $q \colon V\to X$ such that $r|_V = \pi\circ q$.
For all $v\in V$, we have
\begin{gather*} r^* \big( \big((\pi^*)^{-1}\circ p\big)(u) \big)|_v = q^*(p(u))|_v,\end{gather*}
which is smooth in $(u,v) \in U \times V$
by the def\/inition of the standard functional dif\/feology on~$\Omega^k(X)$.
And so we are done.
\end{proof}

\section{Quotient in stages}\label{sec:q in stages}

In this section we give a technical result that we use in the next section.
All quotients, subsets, and products are assumed to be equipped
with the quotient, subset, and product dif\/feologies.

On the image of any Lie group homomorphism $H \to G$
there exists a unique manifold structure
such that the inclusion map of the image into $G$ is an immersion;
this follows from the second item of Lemma~\ref{facts}.
By \emph{Lie subgroup} of $G$ we refer to such an image.
Thus, Lie subgroups are injectively immersed subgroups
that are not necessarily closed.

Recall that a Lie group $G$ acts \emph{properly} on a manifold $N$
if the map $G\times N\to N\times N$ sending $(g,x)$ to $(x,g\cdot x)$
is proper. The action is said to have \emph{constant orbit-type}
if all stabilisers are conjugate.
If a Lie group acts properly and with a constant orbit type,
then the quotient is a~mani\-fold and the quotient map is a f\/ibre bundle.

\begin{Lemma}[quotient in stages] \label{l:q in stages}
Let a Lie group $G$ act on a manifold $N$.
Let $K$ be a Lie subgroup of~$G$
that is normal in $G$.
Also consider the induced action of $G$ on the quotient~$N/K$.
\begin{enumerate}\itemsep=0pt
\item[$(i)$]
There exists a unique map $ e \colon N/G \to (N/K) / G $
such that the following diagram commutes:
\begin{gather} \label{diagram q in stages}
\begin{split}
   \xymatrix{
 N \ar[rr]^{\pi_K} \ar[d]_{\pi_G} && N/K \ar[d]^{\pi_{G/K}} \\
 N/G \ar[rr]^{e} &&  (N/K)/G
}
\end{split}
\end{gather}

\item[$(ii)$]
The map $e$ is a diffeomorphism.

\item[$(iii)$]
The pullback map
\begin{gather*} \pi_K^* \colon \ \Omega^*(N/K) \to \Omega^*(N) \end{gather*}
restricts to a bijection from $\image \pi_{G/K}^*$ onto $\image \pi_G^*$.

\item[$(iv)$]
Suppose that $K$ acts on $N$ properly and with a constant orbit-type,
so that~$N/K$ is a~mani\-fold and $N \to N/K$ is a fibre bundle.
Then the pullback map $\pi_K^*$ also restricts to a bijection
from $\Omega^*_{\rm basic}(N/K)$ onto $\Omega^*_{\rm basic}(N)$.
Consequently, if one of the inclusions
$\image \pi_{G/K}^* \subset \Omega^*_{\rm basic}(N/K)$
and
$\image \pi_G^* \subset \Omega^*_{\rm basic}(N)$
of Proposition~{\rm \ref{p:pullbacksarebasic}} is an equality,
then so is the other.
\end{enumerate}
\end{Lemma}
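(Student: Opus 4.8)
The plan is to handle parts (i)--(iii) by formal diffeological arguments -- the universal property of the quotient diffeology together with injectivity of $\pi_K^*$ -- and to put the real work into part (iv), which rests on the classical theory of proper constant-orbit-type actions recalled in the introduction. For (i), I would define $e(G\cdot x) := G\cdot\pi_K(x)$, sending the $G$-orbit of $x$ to the $G$-orbit of the point $K\cdot x$ in $N/K$; this is well defined because normality of $K$ makes the $G$-action descend to $N/K$ with $\pi_K\circ g = g\circ\pi_K$, so replacing $x$ by $g\cdot x$ does not change the target orbit. Uniqueness of $e$ is forced by surjectivity of $\pi_G$ and the required identity $e\circ\pi_G=\pi_{G/K}\circ\pi_K$, and smoothness of $e$ follows from the universal property of the quotient diffeology on $N/G$ applied to that same composite. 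For (ii), $e$ is a bijection (if $e(G\cdot x)=e(G\cdot y)$ then some $g$ carries $K\cdot x$ to $K\cdot y$, so $g\cdot x$ and $y$ share a $K$-orbit, hence a $G$-orbit), and $e^{-1}$ is smooth by applying the universal property twice: $e^{-1}$ is smooth iff $e^{-1}\circ\pi_{G/K}$ is, iff $e^{-1}\circ\pi_{G/K}\circ\pi_K=\pi_G$ is, which it is.

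For (iii), pulling back the commuting square gives $\pi_G^*\circ e^* = \pi_K^*\circ\pi_{G/K}^*$. Since $e$ is a diffeomorphism by (ii), $e^*$ is a bijection onto $\Omega^*(N/G)$, so $\pi_K^*(\image\pi_{G/K}^*)=\pi_G^*(\image e^*)=\image\pi_G^*$. Injectivity of $\pi_K^*$ from \hypref{l:pullback is injection}{Lemma} then makes the restriction of $\pi_K^*$ to $\image\pi_{G/K}^*$ a bijection onto $\image\pi_G^*$.

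The substance is (iv). Here I would first invoke the classical theorem for the $K$-action: since $K$ acts properly with constant orbit-type, $\pi_K^*$ is an isomorphism from $\Omega^*(N/K)$ onto the complex of $K$-basic forms on $N$. Normality of $K$ gives $G$-equivariance of $\pi_K$ and the identity $d\pi_K(\xi_N)=\xi_{N/K}$ for each generator of the $G$-action, which shows directly that $\pi_K^*$ carries $G$-basic forms on $N/K$ to $G$-basic forms on $N$. Conversely, a $G$-basic form $\alpha$ on $N$ is in particular $K$-basic (as $K\subset G$), so $\alpha=\pi_K^*\beta$ for a unique $\beta\in\Omega^*(N/K)$, and I would check that $\beta$ is $G$-basic. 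Invariance follows from $\pi_K^*(g^*\beta)=g^*(\pi_K^*\beta)=g^*\alpha=\alpha=\pi_K^*\beta$ and injectivity of $\pi_K^*$; horizontality follows by lifting arbitrary tangent vectors on $N/K$ through the submersion $\pi_K$, so that evaluating $\beta$ on a generator $\xi_{N/K}$ and such lifts reduces, via $\pi_K^*\beta=\alpha$, to evaluating $\alpha$ on $\xi_N$ and the lifts, which vanishes by horizontality of $\alpha$. This yields the asserted bijection $\Omega^*_{\rm basic}(N/K)\to\Omega^*_{\rm basic}(N)$.

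Finally, for the ``consequently'' clause, I would combine (iii) and (iv) with injectivity of $\pi_K^*$: for nested subsets $A\subseteq B$ of $\Omega^*(N/K)$ one has $A=B$ if and only if $\pi_K^*(A)=\pi_K^*(B)$. Taking $A=\image\pi_{G/K}^*$ and $B=\Omega^*_{\rm basic}(N/K)$, part (iii) identifies $\pi_K^*(A)$ with $\image\pi_G^*$ and part (iv) identifies $\pi_K^*(B)$ with $\Omega^*_{\rm basic}(N)$, so $\image\pi_{G/K}^*=\Omega^*_{\rm basic}(N/K)$ if and only if $\image\pi_G^*=\Omega^*_{\rm basic}(N)$. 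I expect the only genuine obstacle to be part (iv): correctly invoking the classical $K$-basic isomorphism and then verifying that $G$-equivariance descends along $\pi_K^*$, the horizontality direction being the one step that genuinely uses the submersion lift rather than pure formalities.
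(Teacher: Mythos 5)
Your proposal is correct and follows essentially the same route as the paper: parts (i)--(iii) via formal diffeological arguments (existence/uniqueness of $e$ from equivariance and surjectivity of $\pi_G$, bijectivity, and the commuting square plus injectivity of $\pi_K^*$), and part (iv) via the classical isomorphism onto $K$-basic forms for a proper constant-orbit-type action, with $G$-invariance of $\beta$ descending through injectivity of $\pi_K^*$ and $G$-horizontality through submersion lifts. The only cosmetic difference is in (ii), where you invoke the universal property of the quotient diffeology twice to get smoothness of $e$ and $e^{-1}$, whereas the paper establishes the same fact by direct plot-chasing through the definition of the quotient diffeology.
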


\begin{Remark} \label{r:splits}
The $G$-action on $N/K$ factors through an action of $G/K$.
The quotients $(N/K)/G$ and $(N/K)/(G/K)$ coincide,
as they are quotients of $N/K$ by the same equivalence relation.
If $K$ is closed in $G$ (so that $G/K$ is a Lie group)
and $N/K$ is a manifold, then $G$-basic forms coincide with $(G/K)$-basic forms
on $N/K$.
\end{Remark}

\begin{proof}[Proof of Lemma~\ref{l:q in stages}]
Because $\pi_K$ is $G$-equivariant, such a map $e$ exists.
Because $\pi_G$ is onto, such a map $e$ is unique.
Because the preimage under $\pi_K$ of a $G$-orbit in $N/K$
is a single $G$-orbit in $N$, the map $e$ is one-to-one.
Because the maps $\pi_{G/K}$ and $\pi_K$ are onto,
the map $e$ is onto.

Thus, the map $e$ is a bijection. To show that it is a dif\/feomorphism,
it remains to show,
for every parametrisation $p \colon U \to N/G$,
that $p$ is a plot of $N/G$ if and only if $e \circ p$ is a plot of $(N/K)/G$.

Fix a parametrisation,
\begin{gather*} p \colon \ U \to N/G.\end{gather*}

Suppose that $p$ is a plot of $N/G$. Let $u \in U$.
Then there exist an open neighbourhood~$W$ of~$u$ in~$U$
and a plot $q \colon W \to N$ such that $p|_W = \pi_G \circ q$.
The composition $\pi_K \circ q$ is a~plot of~$N/K$, and
\begin{gather*} \pi_{G/K} \circ \pi_K \circ q = e \circ \pi_G \circ q = e \circ p|_W.\end{gather*}
Because $u \in U$ was arbitrary,
this shows that $e \circ p$ is a plot of $(N/K)/G$.

Conversely, suppose that $e \circ p$ is a plot of $(N/K)/G$.
Let $u \in U$. By applying the def\/inition of the quotient dif\/feology
at $\pi_{G/K}$ and then at $\pi_K$,
we obtain an open neighbourhood $W$ of $u$ in $U$
and a plot $r \colon W \to N$ such that
\begin{gather*} e \circ p|_W = \pi_{G/K} \circ \pi_K \circ r .\end{gather*}
By~(\ref{diagram q in stages}) and by the choice of $r$,
\begin{gather*}e \circ \pi_G \circ r = \pi_{G/K} \circ \pi_K \circ r = e \circ p|_W.\end{gather*}

Because $e$ is one-to-one, this implies that $\pi_G \circ r = p|_W$.
Because $u \in U$ was arbitrary, this shows that $p$ is a plot of $N/G$.
This completes the proof that $e$ is a dif\/feomorphism.

Because the diagram~\eqref{diagram q in stages} commutes, $\pi_K^*$
takes $\image \pi_{G/K}^*$ into $\image \pi_G^*$.
Because $e$ is a~dif\/feomorphism, we can consider its inverse.
From the commuting diagram
\begin{gather*} \xymatrix{
 N \ar[rr]^{\pi_K} \ar[d]_{\pi_G} && N/K \ar[d]^{\pi_{G/K}} \\
  N/G &&   (N/K) / G \ar[ll]_{e^{-1}}
} \end{gather*}
we see that $\pi_K^*$ takes $\image \pi_{G/K}^*$ \emph{onto} $\image \pi_G^*$.
Because $\pi_K^*$ is one-to-one (by \hypref{l:pullback is injection}{Lemma}),
we have a bijection
$\pi_K^* \colon \image \pi_{G/K}^* \to \image \pi_G^*$.

Now suppose that $K$ acts on $N$ properly and with a constant orbit-type,
so that $N/K$ is a~mani\-fold and $\pi_K$ is a f\/ibre bundle.
Then we know that $\pi_K^*$ is a bijection from the dif\/ferential forms
on $N/K$ to the $K$-basic dif\/ferential forms on $N$.

Because $\pi_K$ is $G$-equivariant,
$\pi_K^*$ takes $G$-invariant forms on $N/K$
to $G$-invariant forms on $N$
and $G$-horizontal forms on $N/K$ to $G$-horizontal forms on $N$.
So we have an injection
$\pi_K^* \colon \Omega^*_{\rm basic}(N/K) \to \Omega^*_{\rm basic}(N)$.

Let $\alpha$ be a $G$-basic form on $N$. In particular $\alpha$ is $K$-basic,
so there exists a form $\beta$ on $N/K$ such that $\alpha = \pi_K^* \beta$.
Because $\pi_K^*$ is one-to-one and $\alpha$ is $G$-invariant,
$\beta$ is $G$-invariant.
Because $\pi_K$ is $G$-equivariant and $\alpha$ is $G$-horizontal,
$\beta$ is $G$-horizontal.
This completes the proof that the map
$\pi_K^* \colon \Omega^*_{\rm basic}(N/K) \to \Omega^*_{\rm basic}(N)$
is a bijection.
\end{proof}

\section{The pullback surjects onto basics forms} \label{sec:surjects}

The main result of this section is Proposition~\ref{p:geomquot},
in which we give conditions on an action of a Lie group $G$ on a manifold $M$
under which \emph{every} basic form~$\alpha$ is the pullback
of some dif\/feological form on the quotient.
By \hypref{p:pullbackimgchar}{Proposition},
we need to show that $p_1^*\alpha = p_2^*\alpha$ for every two plots
$p_1 \colon U \to M$ and $p_2 \colon U \to M$
such that
for each $u\in U$ there is some $g \in G$ such that $p_2(u) = g \cdot p_1(u)$.
If $g$ can be chosen to be a smooth function of $u$,
then it is easy to conclude that $p_1^*\alpha = p_2^*\alpha$
if $\alpha$ is basic:

\begin{Lemma}\label{l:smooth case}
Let a Lie group $G$ act on a manifold $M$.
Let $p_1, p_2 \colon U \to M$ be plots.
Suppose that $p_2(u) = a(u) \cdot p_1(u)$
for some smooth function $a \colon U \to G$.
Then for every $\alpha \in \Omega^k_{\rm basic}(M)$
we have $p_1^*\alpha = p_2^*\alpha$.
\end{Lemma}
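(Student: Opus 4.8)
Since $M$ is a manifold, the plots $p_1,p_2$ are ordinary smooth maps and $p_1^*\alpha$, $p_2^*\alpha$ are ordinary pulled-back differential forms on $U$. So the plan is to prove the pointwise identity $(p_1^*\alpha)_u=(p_2^*\alpha)_u$ for each $u\in U$, directly from invariance and horizontality, by differentiating the relation $p_2=\Phi\circ(a,p_1)$, where $\Phi\colon G\times M\to M$ denotes the action and $(a,p_1)\colon U\to G\times M$ is the map $u\mapsto(a(u),p_1(u))$. Fix $u\in U$ and set $g=a(u)$ and $x=p_1(u)$, so that $p_2(u)=g\cdot x$.

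The key step is the Leibniz rule for the differential of the action. Writing $g\colon M\to M$ for the diffeomorphism $y\mapsto g\cdot y$ (so that $g^*$ is the pullback appearing in Proposition~\ref{p:pullbacksarebasic}) and $A_x\colon G\to M$ for the orbit map $b\mapsto b\cdot x$ of Lemma~\ref{facts}, differentiating $p_2$ at $u$ yields, for every $v\in T_uU$,
\[
  (dp_2)_u v=(dg)_x\big((dp_1)_u v\big)+(dA_x)_{g}\big((da)_u v\big).
\]
The first summand is the image under $(dg)_x$ of the vector $(dp_1)_u v$, while the second summand lies in $T_{g\cdot x}(G\cdot x)$; that is, it is tangent to the orbit through $p_2(u)$, because $A_x$ maps into the orbit $G\cdot x=G\cdot(g\cdot x)$.

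Now I would substitute this into $\alpha$ and expand by multilinearity. In the expansion of $\alpha_{g\cdot x}\big((dp_2)_u v_1,\dots,(dp_2)_u v_k\big)$, every term containing at least one of the orbit-tangent summands $(dA_x)_g((da)_u v_i)$ vanishes because $\alpha$ is horizontal, so only the term built entirely from the first summands survives. That term equals $\big(g^*\alpha\big)_x\big((dp_1)_u v_1,\dots,(dp_1)_u v_k\big)$, and since $\alpha$ is $G$-invariant we have $g^*\alpha=\alpha$, whence it is exactly $(p_1^*\alpha)_u\big(v_1,\dots,v_k\big)$. As $u$ and the $v_i$ are arbitrary, this gives $p_2^*\alpha=p_1^*\alpha$. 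The only point demanding care is the Leibniz splitting together with the observation that its second summand is genuinely tangent to the orbit \emph{at the point} $p_2(u)$ (so that horizontality applies there); beyond this the mechanism is transparent, and I do not expect a serious obstacle, consistent with the remark that this case is easy. As an even shorter alternative, one may invoke the characterization of basic forms noted in the remark following Proposition~\ref{p:pullbacksarebasic} (that $\alpha$ is basic iff $\Phi^*\alpha$ equals the pullback of $\alpha$ along the projection $G\times M\to M$): then $p_2^*\alpha=(a,p_1)^*\Phi^*\alpha=(a,p_1)^*\mathrm{pr}_M^*\alpha=p_1^*\alpha$ in one line, since $\mathrm{pr}_M\circ(a,p_1)=p_1$.
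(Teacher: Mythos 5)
Your proof is correct and is essentially the paper's own argument: your Leibniz splitting $(dp_2)_u v=(dg)_x\big((dp_1)_u v\big)+(dA_x)_g\big((da)_u v\big)$ is exactly the paper's decomposition $\xi_2=g\cdot\xi_1+\eta_M|_{p_2(u)}$ (the paper merely names the Lie algebra element $\eta$ with $D_va|_u=\eta\cdot g$ instead of phrasing the orbit-tangent summand through the orbit map), followed by the same multilinear expansion in which horizontality kills the orbit-tangent terms and invariance converts the surviving term into $(p_1^*\alpha)|_u$. Your one-line alternative is also valid and is precisely the route indicated in the paper's remark following Proposition~\ref{p:pullbacksarebasic}.
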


\begin{proof}
Pick a point $u \in U$ and a tangent vector $v \in T_uU$.
Let $\xi_1 = (p_1)_*v $ $\left( \in T_{p_1(u)}M \right)$
and $\xi_2 = (p_2)_*v$ $\left( \in T_{p_2(u)}M \right)$.
Let $g = a(u)$.
The directional derivative $D_v a|_u$ of $a(\cdot)$
in the direction of~$v$ has the form $\eta \cdot g$ $\left( \in T_gG \right)$
for some Lie algebra element~$\eta$ (i.e., it is the right translation
of~$\eta$ by~$g$).
We then have that
\begin{gather*} \xi_2 = g \cdot \xi_1 + {\eta_M}|_{p_2(u)}, \end{gather*}
where $g \cdot \xi_1$ is the image of~$\xi_1$
under the dif\/ferential (push-forward) map
$ g_* \colon T_{p_1(u)}M \to T_{p_2(u)}M $,
and where $\eta_M$ is the vector f\/ield on $M$
that corresponds to~$\eta$;
in particular~$\eta_M$ is everywhere tangent to the $G$ orbits.

Applying this to vectors $v^{(1)}, \ldots, v^{(k)} \in T_uU$,
we get that
\begin{align*}
 (p_2^* \alpha)|_u \big( v^{(1)} , \ldots , v^{(k)} \big)
 &= \alpha|_{ p_2(u) } \big( \xi_2^{(1)} , \ldots ,
 \xi_2^{(k)} \big)
 \quad \text{ where } \xi_2^{(j)} := (p_2)_* v^{(j)} \\
 &= \alpha|_{ p_2(u) }
 \big( g \cdot \xi_1^{(1)} + \eta^{(1)}_M , \ldots ,
 g \cdot \xi_1^{(k)} + \eta^{(k)}_M \big) \\
 & \qquad\qquad\qquad \text{ where } \xi_1^{(j)} := (p_1)_* v^{(j)}
 \text{ and }
 D_{v^{(j)}} a|_u = \eta^{(j)} \cdot g \\
 &= \alpha|_{ p_2(u) } \big( g \cdot \xi_1^{(1)} , \ldots ,
 g \cdot \xi_1^{(k)} \big)
 \quad \text{ because $\alpha$ is horizontal } \\
 &= \alpha|_{ p_1(u) } \big( \xi_1^{(1)} , \ldots ,
 \xi_1^{(k)} \big)
 \quad \text{ because $\alpha$ is invariant } \\
 &= (p_1^* \alpha)|_u \big( v^{(1)} , \ldots , v^{(k)} \big).\tag*{\qed}
\end{align*}
 \renewcommand{\qed}{}
\end{proof}

Unfortunately, in applying \hypref{p:pullbackimgchar}{Proposition},
it might be impossible to choose~$g$
to be a smooth function of~$u$:

\begin{Example}[$\ZZ_2\circlearrowright\RR$]
\label{x:plots differ nonsmoothly}
Let $M=\RR$, let $G = \{ 1, -1 \}$ with $(\pm1) \cdot x = \pm x$,
and let $\pi \colon M \to M/G$ be the quotient map.
Consider the two plots
$p_1 \colon \RR\to M$ and $p_2 \colon \RR\to M$
def\/ined as follows:
\begin{gather*} p_1(t) := \begin{cases} -e^{-1/t^2}  & \text{if} \ \ t<0, \\
 0  & \text{if} \ \ t=0, \\
 e^{-1/t^2}  & \text{if} \ \ t>0,
\end{cases}\end{gather*}
and
\begin{gather*} p_2(t):= \begin{cases} -e^{-1/t^2}  & \text{if} \ \ t\neq 0, \\
 0  & \text{if} \ \ t=0.
\end{cases}\end{gather*}
Then $\pi \circ p_1 = \pi \circ p_2$.
However, for $t<0$ we have $p_1(t)=1\cdot p_2(t)$,
whereas for $t>0$ we have $p_1(t)=-1\cdot p_2(t)$,
and so the two plots
do not dif\/fer by a continuous function to~$G$
on any neighbourhood of $t=0$.
\end{Example}

Our proofs use the following lemma.

\begin{Lemma}\label{l:baire}
Let $U \subseteq \RR^n$ be an open set.
Let $\{ C_i \}$ be a $($finite or$)$ countable collection
of relatively closed subsets of $U$ whose union is $U$.
Then the union of the interiors, $\bigcup_i \inter(C_i)$,
is open and dense in~$U$.
\end{Lemma}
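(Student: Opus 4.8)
The plan is to deduce this directly from the Baire category theorem, using the fact that a nonempty open subset of $\RR^n$ is a Baire space. The statement is a standard consequence, so I would aim for a short, self-contained argument. First I would reduce the ``dense'' claim to a local one: it suffices to show that for every nonempty open ball $B \subseteq U$, the intersection $B \cap \bigcup_i \inter(C_i)$ is nonempty, since density in $U$ means meeting every nonempty open subset of $U$. Openness of $\bigcup_i \inter(C_i)$ is immediate, as it is a union of open sets, so the entire content is the density.

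For the density, fix a nonempty open ball $B \subseteq U$. Since $\bigcup_i C_i = U \supseteq B$, the relatively closed sets $C_i \cap \ol{B}$ (where $\ol{B}$ denotes the closure of $B$ in $U$, which I take small enough to be compact and contained in $U$) cover $\ol{B}$. Now $\ol{B}$ is a complete metric space, hence a Baire space. If each $C_i \cap \ol{B}$ had empty interior in $\ol{B}$, then each would be nowhere dense in $\ol{B}$, and their countable union could not be all of $\ol{B}$ by the Baire category theorem. Therefore some $C_{i_0} \cap \ol{B}$ has nonempty interior as a subset of $\ol{B}$; shrinking to avoid the boundary, this yields a nonempty open subset of $U$ contained in $C_{i_0}$, i.e., a point of $\inter(C_{i_0})$ lying in $B$. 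This shows $B \cap \bigcup_i \inter(C_i) \neq \emptyset$, as desired.

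The only point requiring mild care is the passage between ``interior relative to $\ol{B}$'' and ``interior relative to $U$'': a set open in $\ol{B}$ need not be open in $U$ because of boundary points of $B$. I would handle this by choosing the radius of $B$ so that $\ol{B} \subseteq U$ and then noting that any nonempty relatively-open subset of $\ol{B}$ contains a nonempty relatively-open subset of the open ball $B$ itself (just intersect with $B$, which is dense in $\ol{B}$), and subsets open in $B$ are open in $U$. This is the one genuinely technical step; everything else is bookkeeping. An equivalent and perhaps cleaner route avoiding the boundary issue altogether is to apply Baire directly to the open ball $B$ regarded as its own Baire space: the sets $C_i \cap B$ are relatively closed in $B$ and cover $B$, so by Baire one of them has nonempty interior in $B$, and interiors in $B$ are interiors in $U$. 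I expect the main (and essentially only) obstacle is simply invoking Baire in the correct ambient space so that the resulting interior is genuinely open in $U$ rather than merely relatively open in a closed piece.
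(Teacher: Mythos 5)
Your proposal is correct and matches the paper's approach: the paper's entire proof is the single sentence ``This is a consequence of the Baire category theorem,'' and your localization-to-balls argument is precisely the standard deduction being invoked. The cleaner variant you note at the end (applying Baire directly to the open ball $B$, which is a Baire space, so that relative interiors are automatically open in $U$) is the tidiest way to write it and avoids the boundary bookkeeping entirely.
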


\begin{proof}
This is a consequence of the Baire category theorem.
\end{proof}


We now prove Proposition~\ref{p:geomquot} in the special case of a f\/inite group action. In this case, basic dif\/ferential forms are simply invariant dif\/ferential forms, as the tangent space to an orbit at any point is trivial.

\begin{Proposition}[case of a f\/inite group] \label{p:finite}
Let $G$ be a finite group, acting on a manifold~$M$.
Then every basic form on~$M$ is the pullback of a~$($diffeological$)$ differential form on~$M/G$.
\end{Proposition}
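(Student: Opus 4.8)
The plan is to apply Proposition~\ref{p:pullbackimgchar}: given a basic (hence $G$-invariant) form $\alpha$ on $M$ and two plots $p_1,p_2\colon U\to M$ with $\pi\circ p_1=\pi\circ p_2$, I must show $p_1^*\alpha=p_2^*\alpha$. Since $G$ is finite, for each $u\in U$ there is some $g\in G$ with $p_2(u)=g\cdot p_1(u)$, and the obstruction illustrated in Example~\ref{x:plots differ nonsmoothly} is exactly that $g$ need not depend continuously (let alone smoothly) on $u$. The key idea is to stratify $U$ according to which group element relates the two plots, and then use Lemma~\ref{l:baire} to reduce to the locus where a single $g$ works, where Lemma~\ref{l:smooth case} applies with the constant function $a(u)\equiv g$.

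Concretely, for each $g\in G$ set $C_g:=\{u\in U : p_2(u)=g\cdot p_1(u)\}$. Each $C_g$ is relatively closed in $U$ (it is the preimage of the diagonal under the continuous map $u\mapsto(p_2(u),g\cdot p_1(u))$ into $M\times M$, $M$ being Hausdorff), and because $G$ is finite the collection $\{C_g\}_{g\in G}$ is a finite family of closed sets covering $U$. By Lemma~\ref{l:baire}, the open set $\bigcup_{g\in G}\inter(C_g)$ is dense in $U$. On each $\inter(C_g)$ the two plots satisfy $p_2=g\cdot p_1$ with the \emph{constant} smooth map $a\equiv g$, so Lemma~\ref{l:smooth case} (applied on the open subset $\inter(C_g)\subseteq U$) gives $p_1^*\alpha=p_2^*\alpha$ there. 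Hence $p_1^*\alpha$ and $p_2^*\alpha$ agree on a dense open subset of $U$.

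It remains to promote agreement on a dense subset to agreement everywhere. Since $p_1^*\alpha$ and $p_2^*\alpha$ are genuine (smooth) differential $k$-forms on the open set $U\subseteq\RR^n$, and since they coincide on the dense subset $\bigcup_g\inter(C_g)$, their coefficient functions agree on a dense set and therefore, by continuity, agree on all of $U$. Thus $p_1^*\alpha=p_2^*\alpha$ on $U$, and since $p_1,p_2$ were arbitrary plots with $\pi\circ p_1=\pi\circ p_2$, Proposition~\ref{p:pullbackimgchar} yields that $\alpha$ lies in the image of $\pi^*$, i.e.\ $\alpha=\pi^*\beta$ for some $\beta\in\Omega^k(M/G)$.

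The main obstacle is the passage from the dense open set to all of $U$: one must be sure that the two pullbacks are bona fide smooth forms on the Euclidean domain $U$ so that the density-plus-continuity argument is legitimate, rather than merely formal diffeological data. This is automatic here because $\alpha$ is an ordinary form on the manifold $M$ and the $p_i$ are smooth maps, so $p_i^*\alpha\in\Omega^k(U)$ in the classical sense. Everything else is bookkeeping: verifying that the $C_g$ are closed uses only continuity of the action and Hausdorffness of $M$, and the reduction to the constant-$a$ case is exactly what Lemma~\ref{l:smooth case} was designed to handle.
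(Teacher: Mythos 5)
Your proof is correct and follows essentially the same route as the paper's: the same stratification $C_g=\{u\in U : p_2(u)=g\cdot p_1(u)\}$ by group elements, closedness by continuity, Lemma~\ref{l:baire} to get a dense open union of interiors, and continuity of the pulled-back forms to conclude on all of $U$. The only cosmetic difference is that on $\inter(C_g)$ you invoke Lemma~\ref{l:smooth case} with the constant map $a\equiv g$, whereas the paper uses the direct computation $p_2^*\alpha=p_1^*g^*\alpha=p_1^*\alpha$ from $G$-invariance; these amount to the same thing.
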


\begin{proof}
Fix a basic dif\/ferential $k$-form $\alpha$ on~$M$.
By \hypref{p:pullbackimgchar}{Proposition},
it is enough to show the following:
if $p_1 \colon U \to M$ and $p_2 \colon U \to M$ are plots
such that $\pi \circ p_1 = \pi \circ p_2$,
then $p_1^* \alpha = p_2^*\alpha$ on~$U$.
Fix two such plots $p_1 \colon U \to M$ and $p_2 \colon U \to M$.
For each $g \in G$ let
\begin{gather*} C_g := \{ u \in U\,|\, g \cdot p_1(u) = p_2(u) \}.\end{gather*}
By continuity, $C_g$ is closed for each~$g$.
By our assumption on~$p_1$ and~$p_2$,
\begin{gather*} U = \bigcup_{g\in G} C_g. \end{gather*}
By \hypref{l:baire}{Lemma},
the set $\bigcup_{g \in G} \inter(C_g)$
is open and dense in $U$. Thus, by continuity, it is enough to show
that $p_1^* \alpha = p_2^*\alpha$ on $\inter(C_g)$ for each $g \in G$. This, in turn, follows from the facts that, for each $g\in G$, we have $g\circ p_1=p_2$ on $\inter(C_g)$ and $g^*\alpha=\alpha$.
\end{proof}

Our next result, contained in \hypref{l:conntodisc}{Lemma}
and preceded by \hypref{l:C closed}{Lemma},
is a generalisation of the case of a f\/inite group action:
it shows that the property that interests us
holds for a Lie group action
if it holds for the action of the identity component of that
Lie group, assuming that the action of the identity component is proper.

\begin{Lemma}\label{l:C closed}
Let $G$ be a Lie group, and let $G_0$ be its identity component.
Assume that $G$ acts on a manifold $M$ such that the restricted action
of $G_0$ on $M$ is proper. Then, for any $\gamma\in G/G_0$,
and for any two plots $p_1 \colon U \to M$ and $p_2 \colon U \to M$, the set
\begin{gather*} C_\gamma := \{ u \in U\,|\, \exists\, g \in\gamma\text{ such that }
 g\cdot p_1(u)=p_2(u)\}\end{gather*}
is $($relatively$)$ closed in $U$.
\end{Lemma}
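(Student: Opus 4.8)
The plan is to show that $C_\gamma$ is closed by showing its complement is open, i.e.\ by taking a point $u_0 \in U \setminus C_\gamma$ and producing an open neighbourhood of $u_0$ that misses $C_\gamma$. Fix a representative $g_0 \in \gamma$; then every $g \in \gamma$ has the form $g = g_0 h$ for some $h \in G_0$, so that the condition defining $C_\gamma$ becomes: there exists $h \in G_0$ with $g_0 h \cdot p_1(u) = p_2(u)$, equivalently $h \cdot p_1(u) = g_0^{-1} \cdot p_2(u)$. Since $g_0$ is a fixed diffeomorphism of $M$, the maps $p_1$ and $g_0^{-1} \circ p_2$ are again plots (continuous into $M$), so after renaming we may reduce to the case $\gamma = G_0$, the identity component. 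Thus it suffices to prove: the set $\{u \in U \mid \exists\, h \in G_0 \text{ with } h \cdot p_1(u) = p_2(u)\}$ is closed, under the hypothesis that $G_0$ acts properly.

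First I would reformulate closedness of this set in terms of the proper action. Consider the continuous map $\Phi \colon U \to M \times M$ given by $u \mapsto (p_1(u), p_2(u))$; then $u \in C_{G_0}$ precisely when $\Phi(u)$ lies in the image of the map $G_0 \times M \to M \times M$, $(h,x) \mapsto (x, h\cdot x)$. Because the $G_0$-action is proper, this latter map is proper, and a proper continuous map between (locally compact, Hausdorff, second-countable) spaces is closed; hence its image is a closed subset of $M \times M$. Therefore $C_{G_0} = \Phi^{-1}(\text{image})$ is the preimage of a closed set under the continuous map $\Phi$, and so is closed in $U$.

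The main obstacle I anticipate is justifying that properness of the action yields a \emph{closed} image, and that the preimage argument is clean. Properness says $(h,x) \mapsto (x, h\cdot x)$ is a proper map; for proper maps into locally compact Hausdorff spaces one knows the image is closed, but $M \times M$ is indeed a manifold (locally compact Hausdorff), so this applies directly. The one subtlety is that the image of the orbit-equivalence map is exactly the set $\{(x,y) \mid y \in G_0 \cdot x\}$, the ``orbit relation'' for $G_0$; properness of the action is precisely the statement that this relation is a closed subset of $M \times M$ (this is a standard equivalent formulation of properness). Granting that, the argument is immediate.

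In summary, the key steps in order are: (1) reduce to $\gamma = G_0$ by absorbing a fixed representative $g_0$ into $p_2$; (2) observe that $C_{G_0} = \Phi^{-1}(R)$ where $\Phi = (p_1, p_2)$ is continuous and $R = \{(x,y) \mid y \in G_0 \cdot x\}$ is the orbit relation of the $G_0$-action; (3) use properness of the $G_0$-action to conclude that $R$ is closed in $M \times M$; and (4) conclude that $C_{G_0}$, being the preimage of a closed set under a continuous map, is relatively closed in $U$. The only place needing care is step (3), where I would cite the standard fact that an action is proper if and only if its orbit relation is closed with the orbit-map property, but this is exactly the content of properness of $G_0 \times M \to M \times M$.
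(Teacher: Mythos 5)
Your proof is correct and is essentially the same as the paper's: both exhibit $C_\gamma$ as the preimage, under a continuous map built from the two plots and a fixed coset representative, of the orbit relation $\Delta = \{(m,m') \in M\times M \mid \exists\, g_0\in G_0,\ g_0\cdot m = m'\}$, which is closed because it is the image of the proper map $(g_0,m)\mapsto (m, g_0\cdot m)$. The only cosmetic difference is that you absorb the representative into $p_2$ (reducing to $\gamma=G_0$), while the paper writes $\gamma$ as the coset $G_0 g'$ and absorbs $g'$ into $p_1$, using the map $u\mapsto (g'\cdot p_1(u), p_2(u))$.
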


\begin{proof}
Because the $G_0$ action on $M$ is proper, the set
\begin{gather*}
\Delta := \{ (m,m') \in M \times M \, | \,
\exists\, g_0 \in G_0 \text{ such that } g_0 \cdot m = m' \},
\end{gather*}
being the image of the proper map $(g_0,m) \mapsto (m,g_0 \cdot m)$,
is closed in $M \times M$.

Fix $g' \in \gamma$. Because $G_0$ is normal in $G$,
we can express $\gamma$ as the left coset $G_0 g'$, and we have
\begin{gather*} C_\gamma = \{ u \in U \, | \, \exists\, g_0 \in G_0 \text{ such that }
g_0 g' \cdot p_1(u) = p_2(u) \} .\end{gather*}
We conclude by noting that $C_\gamma$ is the preimage of the closed set $\Delta$
under the continuous map $U \to M \times M$
given by $u \mapsto ( g' \cdot p_1(u) , p_2(u) )$.
\end{proof}

\begin{Lemma} \label{l:conntodisc}
Let $G$ be a Lie group. Let $G_0$ be the identity component of~$G$.
Fix an action of~$G$ on a~mani\-fold~$M$.
Suppose that the restricted $G_0$-action is proper,
and suppose that every $G_0$-basic differential form on~$M$
is the pullback of a diffeological form on $M/G_0$.
Then every $G$-basic differential form on~$M$
is the pullback of a diffeological form on~$M/G$.
\end{Lemma}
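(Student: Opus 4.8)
The plan is to adapt the proof of Proposition~\ref{p:finite} to the present setting, treating $G_0$ in the role that the trivial subgroup played there and the group of components $G/G_0$ in the role of the finite group. I would fix a $G$-basic $k$-form $\alpha$ on $M$. By Proposition~\ref{p:pullbackimgchar} applied to the $G$-action, with $\pi_G \colon M \to M/G$ the quotient map, it suffices to show that $p_1^* \alpha = p_2^* \alpha$ for any two plots $p_1, p_2 \colon U \to M$ satisfying $\pi_G \circ p_1 = \pi_G \circ p_2$; so I would fix two such plots. The condition $\pi_G \circ p_1 = \pi_G \circ p_2$ says exactly that $p_1(u)$ and $p_2(u)$ lie in one $G$-orbit for every $u \in U$, so the sets $C_\gamma$ of Lemma~\ref{l:C closed}, indexed by $\gamma \in G/G_0$, cover $U$.

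Since a Lie group has at most countably many connected components, $G/G_0$ is countable, so $\{C_\gamma\}$ is a countable closed cover of $U$ by Lemma~\ref{l:C closed} (this is the step that uses properness of the $G_0$-action). Lemma~\ref{l:baire} then gives that $\bigcup_\gamma \inter(C_\gamma)$ is open and dense in $U$. As $p_1^*\alpha$ and $p_2^*\alpha$ are ordinary (hence continuous) differential forms on $U$, to conclude that they agree on all of $U$ it is enough to prove that they agree on each $\inter(C_\gamma)$.

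The heart of the argument is this local step. I would fix $\gamma \in G/G_0$ and a representative $g' \in \gamma$; since $G_0$ is normal in $G$, we have $\gamma = G_0 g'$. Set $p_1' := g' \cdot p_1$, the postcomposition of $p_1$ with the diffeomorphism $m \mapsto g'\cdot m$, which is again a plot. On $\inter(C_\gamma)$, for each $u$ there is some $g_0 \in G_0$ with $g_0 g' \cdot p_1(u) = p_2(u)$, i.e.\ $g_0 \cdot p_1'(u) = p_2(u)$; hence $\pi_{G_0} \circ p_1' = \pi_{G_0} \circ p_2$ on $\inter(C_\gamma)$, where $\pi_{G_0} \colon M \to M/G_0$. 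Now $\alpha$, being $G$-basic, is in particular $G_0$-basic, so by the hypothesis of the lemma $\alpha$ lies in the image of $\pi_{G_0}^*$. Applying Proposition~\ref{p:pullbackimgchar} to the $G_0$-action and to the restricted plots $p_1'|_{\inter(C_\gamma)}$ and $p_2|_{\inter(C_\gamma)}$ then yields $(p_1')^*\alpha = p_2^*\alpha$ on $\inter(C_\gamma)$. Finally, because $\alpha$ is $G$-invariant, $g'^*\alpha = \alpha$, and since $p_1' = (m \mapsto g'\cdot m)\circ p_1$ we get $(p_1')^*\alpha = p_1^*(g'^*\alpha) = p_1^*\alpha$. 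Combining the last two identities gives $p_1^*\alpha = p_2^*\alpha$ on $\inter(C_\gamma)$, which is what was needed.

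Rather than a single hard computation, the substance lies in the reductions that make the finite-group argument go through ``modulo $G_0$'': the closedness of the $C_\gamma$ from Lemma~\ref{l:C closed} (where properness enters), the countability of $G/G_0$ needed for Baire, and the normality of $G_0$ in $G$, which is what lets me replace the fibrewise, possibly non-smooth choice of $g \in \gamma$ (cf.\ Example~\ref{x:plots differ nonsmoothly}) by a single fixed representative $g'$ and absorb the residual $G_0$-ambiguity into the $G_0$-descent hypothesis via Proposition~\ref{p:pullbackimgchar}. The one point I would be careful about is the invariance step $g'^*\alpha = \alpha$: it requires the full $G$-invariance of $\alpha$, not merely $G_0$-invariance, and this is exactly what is supplied by the assumption that $\alpha$ is $G$-basic.
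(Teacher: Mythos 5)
Your proof is correct and follows essentially the same route as the paper's: covering $U$ by the closed sets $C_\gamma$ of Lemma~\ref{l:C closed}, invoking Lemma~\ref{l:baire} and continuity, translating $p_1$ by a fixed coset representative $g'$, applying the $G_0$-descent hypothesis via Proposition~\ref{p:pullbackimgchar} on each $\inter(C_\gamma)$, and finishing with the $G$-invariance of $\alpha$. Your explicit remark that $G/G_0$ is countable (needed for the Baire argument) is a detail the paper leaves implicit, but it does not change the argument.
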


\begin{proof}
Fix a $G$-basic form $\alpha$ on $M$.
Let $\pi \colon M\to M/G$ be the quotient map,
and let $p_1 \colon U\to M$ and $p_2 \colon U\to M$ be plots
such that $\pi\circ p_1=\pi\circ p_2$.
Fix $\gamma\in G/G_0$ and
$g' \in \gamma$, and def\/ine $C_\gamma$
as in \hypref{l:C closed}{Lemma}. Def\/ine $\tilde{p}_1 \colon U \to M$
as the composition
$g' \circ p_1$. This is a plot of $M$,
and for any $u \in \inter(C_\gamma)$ we have
$p_2(u) = g_0 \cdot\tilde{p}_1(u)$ for some $g_0 \in G_0$.
Consider the restricted action of $G_0$ on $M$.
Let $\pi_0 \colon M \to M/G_0$ be the corresponding quotient map.
Then the restrictions $\tilde{p}_1|_{\inter(C_\gamma)}$
and $p_2|_{\inter(C_\gamma)}$ are plots of $M$,
and they satisfy
$\pi_0 \circ \tilde{p}_1|_{\inter(C_\gamma)}
 = \pi_0 \circ p_2|_{\inter(C_\gamma)}$.
By hypothesis, and because $\alpha$ is $G_0$-basic (as it is $G$-basic),
$\alpha$ is a pullback of a dif\/feological form on $M/G_0$.
By \hypref{p:pullbackimgchar}{Proposition},
$\tilde{p}_1^* \alpha = p_2^*\alpha$ on $\inter(C_\gamma)$.

But on $\inter(C_\gamma)$ we have
$\tilde{p}_1^* \alpha = p_1^* {g'}^* \alpha = p_1^* \alpha$
(since $\alpha$ is $G$-invariant),
and so $p_1^* \alpha = p_2^* \alpha$ on $\inter(C_\gamma)$.
Since $\gamma\in G/G_0$ is arbitrary,
and $\bigcup_{\gamma\in G/G_0}\inter(C_\gamma)$ is open and dense in $U$
by Lemmas~\ref{l:baire} and \ref{l:C closed},
from continuity we have that $p_1^*\alpha=p_2^*\alpha$ on all of $U$.
Finally, by \hypref{p:pullbackimgchar}{Proposition},
$\alpha$ is the pullback of a form on $M/G$.
\end{proof}


We proceed with two technical lemmas that we will use
to handle non-trivial compact connected stabilisers.

\begin{Lemma}\label{l:techn1}
Let $G$ be a compact connected Lie group acting orthogonally
on some Euclidean space $V = \RR^N$.
Let $g \in G$ and $\eta \in \g$ be such that $\exp(\eta) = g$.
Let $v \in V$. Then there exists $v' \in V$
such that $|v'| \leq |v|$ and $g \cdot v - v = \eta \cdot v'$.
\end{Lemma}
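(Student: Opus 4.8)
The plan is to translate this statement about the group action into a concrete computation with a single skew-symmetric matrix. Since $G$ acts orthogonally on $V$, the representation $\rho\colon G\to\O(V)$ differentiates to a Lie algebra homomorphism into the skew-symmetric matrices; write $A$ for the skew-symmetric matrix $\frac{d}{dt}\big|_{t=0}\rho(\exp(t\eta))$ by which $\eta$ acts on $V$. Because $\rho$ intertwines the group exponential with the matrix exponential, $g=\exp(\eta)$ acts on $V$ as $e^{A}$, while $\eta\cdot w=Aw$ for every $w\in V$. Thus the identity we must arrange, $g\cdot v-v=\eta\cdot v'$, becomes $(e^{A}-I)v=Av'$, and the entire problem now lives inside the finite-dimensional space $V$.

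First I would exhibit a natural candidate for $v'$ by factoring $A$ out of the Taylor series of $e^{A}-I$. Writing $\phi(z)=(e^{z}-1)/z=\sum_{k\ge 0}z^{k}/(k+1)!$, one has $e^{A}-I=A\,\phi(A)$, so setting $v':=\phi(A)v$ gives $Av'=(e^{A}-I)v=g\cdot v-v$ at once. It then remains only to verify the length bound $|v'|=|\phi(A)v|\le|v|$, that is, that the operator norm of $\phi(A)$ is at most $1$.

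This norm estimate is the crux of the lemma. The key observation is that $A$ is real skew-symmetric, hence normal with purely imaginary eigenvalues $i\lambda_{1},\dots,i\lambda_{m}$; therefore $\phi(A)$ is normal with the same eigenvectors, and its operator norm equals $\max_{j}|\phi(i\lambda_{j})|$. A short calculation gives $|\phi(i\lambda)|=|e^{i\lambda}-1|/|\lambda|=|\sin(\lambda/2)|/|\lambda/2|\le 1$ for every real $\lambda$, whence $\phi(A)$ is a contraction and $|v'|\le|v|$. To keep the argument self-contained and avoid complex functional calculus, I would instead use the orthogonal block decomposition of the real skew-symmetric $A$ into $\ker A$ together with two-dimensional $A$-invariant planes on which $A=\bigl(\begin{smallmatrix}0&-\lambda\\\lambda&0\end{smallmatrix}\bigr)$; on such a plane one computes directly that $\phi(A)=\frac{\sin\lambda}{\lambda}I+\frac{1-\cos\lambda}{\lambda^{2}}A$, a similarity of the plane that scales lengths by $|\sin(\lambda/2)|/|\lambda/2|\le 1$, and summing $|\phi(A)v|^{2}$ over the mutually orthogonal blocks yields $|\phi(A)v|\le|v|$.

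The main obstacle is exactly this sharp bound $\|\phi(A)\|\le 1$: a crude estimate of $e^{A}-I$ does not control $|\phi(A)v|$, and one must genuinely use the skew-symmetry of $A$ (equivalently, the orthogonality of the $G$-action) to obtain the contraction constant $1$. Everything else, namely the identification of the action of $\eta$ with $A$ and the factorization $e^{A}-I=A\,\phi(A)$, is purely formal.
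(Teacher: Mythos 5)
Your proof is correct, but it establishes the key estimate by a different mechanism than the paper. The paper applies the fundamental theorem of calculus to $t \mapsto \exp(t\eta)\cdot v$: writing $g\cdot v - v = \int_0^1 \tfrac{d}{dt}\left(\exp(t\eta)\cdot v\right)dt = \eta\cdot\int_0^1 \exp(t\eta)\cdot v\,dt$, it takes $v' := \int_0^1 \exp(t\eta)\cdot v\,dt$, and the bound $|v'|\le |v|$ follows in one line from the triangle inequality for integrals, since each $\exp(t\eta)$ is orthogonal and hence $|\exp(t\eta)\cdot v| = |v|$. Amusingly, your $v'$ is the very same vector in disguise: $\phi(A) = \sum_{k\ge 0} A^k/(k+1)! = \int_0^1 e^{tA}\,dt$, so the two constructions produce the same contraction $\phi(A)$ and differ only in how its norm is controlled. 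Where the paper uses orthogonality of the group elements $\exp(t\eta)$ directly, you use the infinitesimal counterpart --- skew-symmetry of $A$ --- together with spectral theory (normality and $|\phi(i\lambda)| = |\sin(\lambda/2)|/|\lambda/2| \le 1$, or equivalently the real $2\times 2$ block decomposition). Your route is heavier, requiring the spectral theorem for skew-symmetric matrices, but it yields more information: the sharp operator norm $\max_j |\sin(\lambda_j/2)|/|\lambda_j/2|$ of $\phi(A)$, rather than the mere bound by $1$. The paper's route uses nothing beyond one-variable calculus and the isometry property, which is why it is only a few lines long. One hypothesis-level point common to both arguments, which you rightly make explicit, is the identity $\rho(\exp_G(\eta)) = e^{d\rho(\eta)}$; this standard fact about Lie group homomorphisms also underlies the paper's silent identification of $g$ and $\eta$ with the matrices by which they act.
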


\begin{proof}
Since $V$ is a vector space, we identify tangent spaces at points of $V$
with $V$ itself.
We also identify elements of the group $G$ and of the Lie algebra $\g$
with the matrices by which these elements act on $V=\RR^N$
\begin{gather*}
 g \cdot v - v = \exp(t\eta) \cdot v \big|_0^1\\
\hphantom{g \cdot v - v}{}
 = \int_0^1 \left( \frac{d}{dt} \exp(t\eta) \cdot v\right) dt\\
\hphantom{g \cdot v - v}{}
 = \int_0^1 \left( \eta \cdot \exp(t\eta) \cdot v\right) dt \\
 \hphantom{g \cdot v - v}{}
 = \eta \cdot \int_0^1 \left( \exp(t\eta) \cdot v \right) dt.
\end{gather*}
So def\/ine $v':=\int_0^1\left(\exp(t\eta)\cdot v\right)dt$. Finally,
\begin{gather*}
 |v'| = \left| \int_0^1 \left( \exp(t\eta) \cdot v \right) dt \right|\\
\hphantom{|v'|}{}
 \leq \int_0^1 \left| \exp(t\eta) \cdot v\right| dt\\
 \hphantom{|v'|}{}
 = \int_0^1 |v| dt \quad \text{because the action is orthogonal} \\
\hphantom{|v'|}{} = |v|.
\end{gather*}
This completes the proof.
\end{proof}

\begin{Lemma}\label{l:techn2}
Let $G$ be a compact connected Lie group acting orthogonally
on some Euclidean space $V = \RR^N$.
Let $\gamma_1$ and $\gamma_2$ be smooth curves from $\RR$ into $V$
such that $ \gamma_1(0) = \gamma_2(0) = 0 $
and such that for every $t \in \RR$ there exists $g_t \in G$
satisfying $\gamma_2(t) = g_t \cdot \gamma_1(t)$.
Let $\xi_1 = \dot{\gamma}_1(0)$ and $\xi_2 = \dot{\gamma}_2(0)$.
Then, for every horizontal form $\alpha$ on $V$,
we have $(\xi_2-\xi_1)\hook\alpha|_0=0$.
\end{Lemma}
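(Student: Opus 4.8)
The plan is to realize the difference $\xi_2-\xi_1$ as a limit, along the two curves, of vectors that are tangent to $G$-orbits through points approaching the origin, and then to pass horizontality to the limit using smoothness of $\alpha$. The conceptual obstacle is that the orbit of the origin is just $\{0\}$, so horizontality imposes \emph{no} condition on $\alpha|_0$ directly; the constraint on $\alpha|_0$ can only emerge in a limit from the horizontality constraints of $\alpha$ at nearby points, and the bookkeeping of these base points is the delicate part.

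First I would use that $G$ is compact and connected, so $\exp\colon\g\to G$ is surjective: for each $t$ pick $g_t\in G$ with $\gamma_2(t)=g_t\cdot\gamma_1(t)$ and then $\eta_t\in\g$ with $\exp(\eta_t)=g_t$. Applying \hypref{l:techn1}{Lemma} with $g=g_t$ and $v=\gamma_1(t)$ yields a vector $v'_t\in V$ with $|v'_t|\le|\gamma_1(t)|$ and
\[
\gamma_2(t)-\gamma_1(t)=g_t\cdot\gamma_1(t)-\gamma_1(t)=\eta_t\cdot v'_t .
\]
Since $\gamma_1(0)=0$ and $\gamma_1$ is continuous, $|\gamma_1(t)|\to 0$, hence $v'_t\to 0$ as $t\to 0$. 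This is the one place where the norm estimate of \hypref{l:techn1}{Lemma} is essential, and it is what forces the relevant base points to collapse to the origin.

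Next I would set $w_t:=\tfrac1t\big(\gamma_2(t)-\gamma_1(t)\big)$ for $t\neq 0$. Because $\gamma_1(0)=\gamma_2(0)=0$, we have $w_t\to\dot\gamma_2(0)-\dot\gamma_1(0)=\xi_2-\xi_1$ as $t\to 0$. On the other hand, $w_t=\tfrac1t\,\eta_t\cdot v'_t$ is $\tfrac1t$ times the value at $v'_t$ of the fundamental vector field of $\eta_t$, which by \hypref{facts}{Lemma} is tangent to the orbit $G\cdot v'_t$ at $v'_t$. Since $\alpha$ is horizontal, contracting $\alpha|_{v'_t}$ with this orbit-tangent vector gives $0$, so $w_t\hook\alpha|_{v'_t}=0$ for every $t\neq 0$.

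Finally I would let $t\to 0$. We have $w_t\to\xi_2-\xi_1$ and $v'_t\to 0$, so by smoothness of $\alpha$ (viewed as a continuous map $V\to\bigwedge^k V^*$) together with the joint continuity of the contraction pairing, $w_t\hook\alpha|_{v'_t}\to(\xi_2-\xi_1)\hook\alpha|_0$. As the left-hand side is identically $0$, the limit is $0$, giving $(\xi_2-\xi_1)\hook\alpha|_0=0$, as desired. I note that no continuity of $t\mapsto g_t$ or $t\mapsto\eta_t$ is required: for each $t$ these are arbitrary choices, and only the pointwise facts $|v'_t|\le|\gamma_1(t)|$ and $w_t\hook\alpha|_{v'_t}=0$ enter the argument, so the genuine (two-sided) limit goes through without extracting subsequences.
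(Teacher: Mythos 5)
Your proof is correct and takes essentially the same approach as the paper's: both realize $\xi_2-\xi_1$ as a limit of orbit-tangent vectors $(\eta_t/t)\cdot v'_t$ at base points $v'_t\to 0$ produced by \hypref{l:techn1}{Lemma}, invoke horizontality of $\alpha$ at those nearby points, and conclude by joint continuity of $(w,v)\mapsto w\hook\alpha|_v$. The only cosmetic difference is that you take the genuine two-sided limit $t\to 0$ where the paper extracts a sequence $t_n\to 0$ and sets $\mu_n:=\eta_n/t_n$.
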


Note that, in the assumptions of this lemma,
$t \mapsto g_t$ is not necessarily continuous.

\begin{proof}
We claim that there exists a sequence of vectors $v'_n$
converging to $0$ in $V$, and a sequence~$\mu_n$ in~$\g$, such that
\begin{gather*} \xi_2 - \xi_1 = \underset{n\to\infty}{\lim} \mu_n\cdot v'_n. \end{gather*}

Indeed,
choose any sequence of non-zero real numbers $t_n$ converging to $0$.
For
each $n$ choose $\eta_n \in \g$ such that $\exp(\eta_n)=g_{t_n}$.
Since we are working on a vector space, we can subtract the curves
and consider $\gamma_2(t) - \gamma_1(t)$. We have
\begin{gather*}
 \xi_2 - \xi_1 = \frac{d}{dt} \Big|_{t=0} (\gamma_2(t)-\gamma_1(t))\\
  \hphantom{\xi_2 - \xi_1}{}
 = \underset{t \to 0} {\lim}
 \left( \frac{\gamma_2(t)-\gamma_1(t)}{t} \right)\\
 \hphantom{\xi_2 - \xi_1}{}
 = \underset{n\to\infty} {\lim}
 \left( \frac{\gamma_2(t_n)-\gamma_1(t_n)}{t_n} \right) \\
 \hphantom{\xi_2 - \xi_1}{}
 = \underset{n\to\infty}{\lim}
 \left( \frac{g_{t_n} \cdot \gamma_1(t_n) - \gamma_1(t_n)}{t_n} \right)\\
 \hphantom{\xi_2 - \xi_1}{}
 = \underset{n\to\infty}{\lim}
 \left( \frac{ \eta_n \cdot v'_n }{t_n} \right)
\end{gather*}
for some $\eta_n \in \g$ and
for some $v'_n \in V$ that satisfy $ |v'_n| \leq |\gamma_1(t_n)| $;
the last equality is a result of \hypref{l:techn1}{Lemma}.
Because $|v_n'| \leq |\gamma_1(t_n)| \underset{n\to\infty}{\to} |\gamma_1(0)|
 = 0$,
the claim holds with $\mu_n := \eta_n/t_n$.

We now have
\begin{gather*}
 (\xi_2-\xi_1) \hook \alpha|_0 = \underset{n\to\infty}{\lim}
 \left( (\mu_n\cdot v'_n) \hook(\alpha|_{v'_n}) \right)\\
 \hphantom{(\xi_2-\xi_1) \hook \alpha|_0}{}
 = \underset{n\to\infty}{\lim}
 \left( (\mu_n)_V\hook\alpha|_{v'_n} \right),
\end{gather*}
where $(\mu_n)_V$ is the vector f\/ield on $V$ induced by $\mu_n\in\g$.
Because $\alpha$ is horizontal, the last term above vanishes.
\end{proof}

The f\/inal ingredient that we need for Proposition~\ref{p:geomquot} is
the following property of the model that appears in the slice theorem
(Theorem~\ref{t:slice theorem}).
Let $G$ be a Lie group, $H$ a closed subgroup, and $V$ a vector space
with a linear $H$-action. Recall that the equivariant vector bundle
$ G \times_H V $
over $G/H$ is obtained as the quotient of $G \times V$
by the anti-diagonal $H$-action $h \cdot (g,v) = (g h^{-1} , h \cdot v)$
and that the $G$-action on $G\times_H V$ is $g\cdot[g',v]=[gg',v]$.

\begin{Lemma} \label{l:reduce to slice}
Suppose that every $H$-basic form on $V$ is the pullback
of a diffeological form on~$V/H$.
Then every $G$-basic form on $G \times_H V$ is the pullback
of a diffeological form on \mbox{$(G \times_H V)/G$}.
\end{Lemma}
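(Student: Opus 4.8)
The plan is to realize the bundle $G\times_H V$, together with its $G$-action, as a double quotient of a product and then to apply the quotient-in-stages lemma (Lemma~\ref{l:q in stages}) twice, once to carry the hypothesis upward and once to bring the conclusion back down. Set $N=G\times V$ and let $\hat G=G\times H$ act on $N$ by $(g',h)\cdot(g,v)=(g'gh^{-1},hv)$; equivalently, $K:=G\times\{1\}$ acts by left translation on the first factor while $H':=\{1\}\times H$ acts anti-diagonally, and these two actions commute, so that they assemble into the single $\hat G=G\times H$ action. Both $K$ and $H'$ are closed normal subgroups of $\hat G$, and each acts on $N$ freely and properly (left translation of $G$ on itself is proper, and a closed subgroup acts properly by translation), hence with constant orbit-type; consequently $N/K$ and $N/H'$ are manifolds and $N\to N/K$, $N\to N/H'$ are fibre bundles. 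I would begin by recording the two resulting identifications: $N/K\cong V$, with residual $\hat G/K\cong H$ action equal to the original $H$-action on $V$, so that $\pi_{\hat G/K}$ becomes the quotient $V\to V/H$; and $N/H'\cong G\times_H V$, with residual $\hat G/H'\cong G$ action equal to the given $G$-action, so that $\pi_{\hat G/H'}$ becomes $\pi\colon G\times_H V\to (G\times_H V)/G$.

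Next I would apply Lemma~\ref{l:q in stages}$(iv)$ with ambient group $\hat G$ and normal subgroup $K$. Using Remark~\ref{r:splits} to read $\hat G$-basic forms on $N/K$ as $H$-basic forms on $V$, the hypothesis of the present lemma says exactly that the inclusion $\image \pi_{\hat G/K}^*\subset \Omega^*_{\rm basic}(N/K)$ is an equality. The lemma then yields that the companion inclusion $\image \pi_{\hat G}^*\subset \Omega^*_{\rm basic}(N)$ is also an equality; that is, every $\hat G$-basic form on $N$ descends to $N/\hat G$. I would then apply Lemma~\ref{l:q in stages}$(iv)$ a second time, now with normal subgroup $H'$: the middle statement $\image \pi_{\hat G}^*=\Omega^*_{\rm basic}(N)$ just established is one of the two equalities, so the lemma forces the other, $\image \pi_{\hat G/H'}^*=\Omega^*_{\rm basic}(N/H')$. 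Translating back through the identifications above, this is precisely the assertion that every $G$-basic form on $G\times_H V$ is the pullback of a diffeological form on $(G\times_H V)/G$.

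The routine content lies in setting up the two identifications; the conceptual point is that the hypothesis and the conclusion are linked through the single intermediate statement about $\hat G$-basic forms on $N=G\times V$, so that one application of quotient-in-stages transports the hypothesis up to $N$ and a second brings it down the other leg. The step I expect to require the most care is verifying that $H'$ acts properly on $N=G\times V$ when $H$ is merely a closed, not necessarily compact, subgroup, so that $N/H'=G\times_H V$ is genuinely a manifold and $N\to G\times_H V$ a fibre bundle; this is exactly what licenses the second application of Lemma~\ref{l:q in stages}$(iv)$. The commuting of the $K$- and $H'$-actions, needed both to regard them as a single $\hat G$-action and to read off the residual actions, is immediate but should be recorded explicitly.
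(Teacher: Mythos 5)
Your proposal is correct and is essentially the paper's own proof: the paper also lets $G\times H$ act on $G\times V$, forms the two-legged quotient-in-stages diagram, and applies Lemma~\ref{l:q in stages}$(iv)$ twice—first to transport the hypothesis $\image\pi_V^*=\Omega^*_{\rm basic}(V)$ up to the statement $\image\pi_{G\times H}^*=\Omega^*_{\rm basic}(G\times V)$, then down the other leg to get $\image\pi^*=\Omega^*_{\rm basic}(G\times_H V)$. Your explicit verification that the anti-diagonal $H$-action on $G\times V$ is free and proper (so that $G\times_H V$ is a manifold and $G\times V\to G\times_H V$ a fibre bundle) is a point the paper leaves implicit, and it is a worthwhile addition.
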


\begin{proof}
Let $G \times H$ act on $G \times V$
where $G$ acts by left multiplication on the f\/irst factor
and where $H$ acts by the anti-diagonal action
$ h \colon (g,v) \mapsto (gh^{-1} , h \cdot v)$.
We have two maps:
\begin{gather*} 
\xymatrix{
 & G \times V \ar[dl]_{\pr_2} \ar[dr]^{\pi_H} & \\
 V & &  \quad G \times_H V
}
\end{gather*}
Here, the map $\pi_H \colon G \times V \to G \times_H V$ is the quotient
by the $H$-action, and the projection to the second factor
$\pr_2 \colon G \times V \to V$
can be identif\/ied with the quotient by the $G$-action.

We apply quotient in stages (\hypref{l:q in stages}{Lemma})
in two ways:
taking the quotient
by~$G$ and then by~$H$, and taking the quotient by~$H$ and then by~$G$.
This gives the following commuting diagram:
\begin{gather} \label{two quotients}
\begin{split}
&
\xymatrix{
 & G \times V \ar[dl]_{\pr_2} \ar[dd]^{\pi_{G \times H}} \ar[dr]^{\pi_H} & \\
 V \ar[d]_{\pi_V} & & G \times_H V \ar[d]^{\pi} \\
   V/H \ar[r]^(0.3){e} & (G \times V) / (G \times H)
 &  (G \times_H V)/G \ar[l]_(0.42){e'}
}
\end{split}
\end{gather}
where $e$ and $e'$ are dif\/feomorphisms, and $\pi_V$, $\pi_{G\times H}$, and $\pi$ are the quotient maps.

By hypothesis, the inclusion
$\image \pi_V^* \subset \Omega^*_{\text{basic}}(V)$ is an equality.
Applying Part (iv) of \hypref{l:q in stages}{Lemma} to the left hand side
of the diagram~\eqref{two quotients},
we conclude that the inclusion
$\image \pi_{G \times H}^* \subset
 \Omega^*_{\text{basic}}(G \times V)$ is an equality.
Applying Part (iv) of \hypref{l:q in stages}{Lemma} to the right hand side
of the diagram~\eqref{two quotients},
we further conclude that the inclusion
$\image \pi^* \subset \Omega^*_{\text{basic}} (G \times_H V)$
is an equality.
\end{proof}

We are now ready to prove the main result of this section.

\begin{Proposition}[pullback surjects to basic forms]
\label{p:geomquot}
Let a Lie group $G$ act on a~mani\-fold~$M$.
Assume that the identity component of~$G$ acts properly.
Let $\pi \colon M \to M/G$ be the quotient map.
Then every basic form on $M$ is the pullback of a $($diffeological$)$ differential
form on $M/G$ via $\pi^*$.
\end{Proposition}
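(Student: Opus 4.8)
The plan is to prove the proposition by induction on $\dim M$, using \hypref{p:pullbackimgchar}{Proposition} as the criterion to be verified and the slice theorem to localise. First I would dispose of the disconnectedness of $G$: by \hypref{l:conntodisc}{Lemma} it suffices to treat the identity component $G_0$, so I may assume from the outset that $G$ is connected and acts properly. To apply \hypref{p:pullbackimgchar}{Proposition} I must show that any two plots $p_1,p_2\colon U\to M$ with $\pi\circ p_1=\pi\circ p_2$ satisfy $p_1^*\alpha=p_2^*\alpha$ for every basic form $\alpha$. Since two differential forms on $U$ that agree near each point agree everywhere, this is a local statement on $U$, and I would fix $u_0\in U$, set $x=p_1(u_0)$, and feed $x$ into the slice theorem (\hypref{t:slice theorem}{Theorem}), which is available because $G$ acts properly. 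This produces a $G$-invariant open neighbourhood $\mathcal O$ of the orbit $G\cdot x$ together with a $G$-equivariant diffeomorphism $\mathcal O\cong G\times_H V$, where $H$ is the (compact) stabiliser of $x$ and $V=T_xM/T_x(G\cdot x)$ is the normal space. Because $\mathcal O$ is $G$-invariant and open and $p_2(u_0)\in G\cdot x\subseteq\mathcal O$, continuity of $p_1,p_2$ yields a neighbourhood $U'\ni u_0$ on which both plots land in $\mathcal O$ and still satisfy $\pi_{\mathcal O}\circ p_1=\pi_{\mathcal O}\circ p_2$; so it is enough to prove that every basic form descends on the model $G\times_H V$.

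The induction hypothesis enters when the orbit $G\cdot x$ is positive-dimensional, so that $\dim V=\dim M-\dim(G\cdot x)<\dim M$. Since $H$ is compact, its identity component acts properly on $V$ (any compact-group action is proper), so the induction hypothesis together with \hypref{l:conntodisc}{Lemma} shows that every $H$-basic form on $V$ is the pullback of a diffeological form on $V/H$. Then \hypref{l:reduce to slice}{Lemma} upgrades this to the statement that every $G$-basic form on $G\times_H V$ descends, and \hypref{p:pullbackimgchar}{Proposition} applied on $\mathcal O$ gives $p_1^*\alpha=p_2^*\alpha$ on $U'$. This settles the local statement near every $u_0$ whose orbit is positive-dimensional.

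The one remaining, and genuinely hard, case is when $x$ is a fixed point, i.e., $\dim(G\cdot x)=0$; here $H=G$ is compact and connected, $\dim V=\dim M$, and the model reduces to $V=T_xM$ with a linear $G$-action, which I may take to be orthogonal after averaging an inner product. The naive idea of choosing $a(u)$ with $p_2(u)=a(u)\cdot p_1(u)$ smoothly and invoking \hypref{l:smooth case}{Lemma} fails exactly here, as \hypref{x:plots differ nonsmoothly}{Example} shows, and this is where \hypref{l:techn2}{Lemma} does the work. I would stratify $U'$ by the fixed-point set $F=V^G$. On the open set $U'\setminus p_1^{-1}(F)$ the plot $p_1$ takes values on positive-dimensional orbits, so a further application of the slice theorem inside $V$ lowers the dimension and the previous paragraph gives $p_1^*\alpha=p_2^*\alpha$ there. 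On $p_1^{-1}(F)$ one has $p_1(u)=p_2(u)=:y\in F$, and for each $v\in T_uU'$ I would apply \hypref{l:techn2}{Lemma} to the curves $t\mapsto p_i(c(t))-y$ (for a curve $c$ through $u$ with velocity $v$), after translating by the fixed point $y$ --- which commutes with the linear action precisely because $y$ is fixed --- to conclude $\big((p_2)_*v-(p_1)_*v\big)\hook\alpha|_y=0$. By multilinearity and antisymmetry of $\alpha$ this forces $(p_1^*\alpha)|_u=(p_2^*\alpha)|_u$ at every point of $p_1^{-1}(F)$, so the two pullbacks agree on all of $U'$.

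Combining the two cases, $p_1^*\alpha=p_2^*\alpha$ holds near every point of $U$, hence on all of $U$, and \hypref{p:pullbackimgchar}{Proposition} concludes that $\alpha$ lies in the image of $\pi^*$; the base case $\dim M=0$ is immediate, since then only $0$-forms occur and these descend automatically from $G$-invariance. I expect the fixed-point case to be the crux: the relating group element may jump discontinuously near a fixed point, and it is exactly \hypref{l:techn2}{Lemma} --- whose proof trades smoothness of $u\mapsto a(u)$ for orthogonality of the action and horizontality of $\alpha$ at nearby non-fixed points, passing to a limit --- that circumvents this obstruction.
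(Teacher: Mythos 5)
Your proof is correct, and it relies on the same toolkit as the paper: \hypref{l:conntodisc}{Lemma} to reduce to a connected group acting properly, the slice theorem, \hypref{l:reduce to slice}{Lemma} for orbits of positive dimension, \hypref{l:techn2}{Lemma} at fixed points, and \hypref{p:pullbackimgchar}{Proposition} as the descent criterion. The one genuine structural difference is the induction parameter: you induct on $\dim M$, whereas the paper inducts on $\dim G$ (via two statements A(d), B(d) about connected, respectively arbitrary, groups of dimension~$d$). The paper's termination step is that at a non-fixed point the stabiliser $H$ is a proper closed subgroup of the connected group $G$, hence $\dim H < \dim G$; yours is that at a point with positive-dimensional orbit, $\dim V = \dim M - \dim(G\cdot x) < \dim M$. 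For connected $G$ these dichotomies coincide, so both inductions close, and your invocations of the inductive hypothesis are legitimate because your induction statement quantifies over all acting groups, not just $G$. Two smaller remarks. First, because you insist on proving $p_1^*\alpha = p_2^*\alpha$ on a whole neighbourhood $U'$ of $u_0$ rather than at the single point $u_0$, you are forced into the extra stratification of $U'$ by $p_1^{-1}(V^G)$ in the fixed-point case, together with the translated form of \hypref{l:techn2}{Lemma} at arbitrary fixed points $y$ (valid, as you note, since translation by a fixed vector commutes with the linear action, carries orbits to orbits, and hence preserves horizontality); the paper avoids all of this by establishing the equality pointwise at each parameter value, so that \hypref{l:techn2}{Lemma} is only ever applied at the origin of the slice, and you could simplify your Case B the same way, since pointwise agreement at every $u_0$ already suffices. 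Second, your explicit multilinearity-and-antisymmetry step, passing from $\bigl((p_2)_*v - (p_1)_*v\bigr)\hook\alpha = 0$ for every $v$ to equality of the pulled-back $k$-forms, is a detail the paper glosses over but which is genuinely needed in both arguments, so it is good that you spelled it out.
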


\begin{proof}
By \hypref{l:conntodisc}{Lemma}, to prove this result
for an arbitrary Lie group, it is enough to prove it
for the action of the identity component of the group.

For every non-negative integer $d$, consider the following two statements.
\begin{enumerate}\itemsep=0pt
\item[A(d):]
For every connected Lie group $K$ with $\dim K = d$,
and for every $K$-manifold $N$ on which the $K$-action is proper,
every $K$-basic form on~$N$ is the pullback of a dif\/ferential form on~$N/K$.

\item[B(d):]
For every Lie group $K$ with $\dim K = d$,
and for every $K$-manifold $N$ on which the identity component of $K$
acts properly, every $K$-basic form on~$N$ is the pullback
of a dif\/ferential form on $N/K$.
\end{enumerate}

Since the result holds for the trivial group, Statement A(0) is true.
By \hypref{l:conntodisc}{Lemma}, it follows that Statement B(0) is true.
Proceeding by induction, we f\/ix a positive integer $d$,
we assume that Statement~B(d$'$) is true for all $d' < d$,
and we would like to prove that Statement~B(d) is true.
By \hypref{l:conntodisc}{Lemma}, Statement~A(d) implies Statement~B(d);
thus, it is enough to prove that Statement~A(d) is true.
That is, we may now restrict to the special case of the proposition
in which the group is connected and the action is proper,
while assuming that the general case of the proposition is true
for all Lie groups of smaller dimension.

Now, let $G$ be a connected Lie group,
and let $M$ be a $G$-manifold on which the $G$-action is proper.
Fix a $G$-basic form~$\alpha$ on~$M$.
We would like to show that~$\alpha$ is the pullback of a~dif\/ferential form
on~$M/G$.

By \hypref{p:pullbackimgchar}{Proposition} we need to show,
for any two plots $p_1 \colon W \to M$
and $p_2 \colon W\to M$ for which $\pi \circ p_1 = \pi \circ p_2$,
that $p_1^* \alpha = p_2^* \alpha$.
Let $p_1$ and $p_2$ be two such plots.
Fix $u\in W$. We would like to show that $p_1^*\alpha|_u = p_2^*\alpha|_u$.

Let $x = p_2(u)$. Let $H$ be the stabiliser of $x$.
By Theorem~\ref{t:slice theorem} 
there exists
a $G$-invariant open neighbourhood $U$ of $x$ and an equivariant
dif\/feomorphism $F \colon U \to G \times_H V$
where $V = T_xM / T_x(G \cdot x)$.
Because $F$ is an equivariant dif\/feomorphism and $\alpha$ is $G$-basic,
$(F^{-1})^* \alpha$ is $G$-basic on $G \times_H V$.

Either $x$ is a f\/ixed point, or $x$ is not a f\/ixed point.

Suppose that $x$ is a f\/ixed point. Then $H=G$.
So $p_1(u) = p_2(u) = x$, and $F$ identif\/ies $U$ with $V = T_xM$,
sending $x$ to $0 \in V$.
Fixing a $G$-invariant Riemannian metric, we have
that $G$ acts linearly and orthogonally on $V$. Let $v \in T_uW$. Applying \hypref{l:techn2}{Lemma} to the curves
$\gamma_1(t) := F(p_1(u+tv))$ and $\gamma_2(t) := F(p_2(u+tv))$ in $V$
and to the basic form $(F^{-1})^*\alpha$ on $V$,
we obtain that $\dot{\gamma}_1(0) \hook (F^{-1})^*\alpha|_0
 = \dot{\gamma}_2(0) \hook (F^{-1})^*\alpha|_0$.
This, in turn, implies that $v \hook p_1^*\alpha|_u = v \hook p_2^* \alpha|_u$.
Because $v \in T_uW$ is arbitrary, we conclude
that $p_1^*\alpha|_u = p_2^*\alpha|_u$, as required.

Suppose that $x$ is not a f\/ixed point.
Then the stabiliser $H$ of $x$ is a proper subgroup of $G$.
Since $G$ is connected, $\dim H < \dim G$.
By the induction hypothesis, every $H$-basic form on $V$
is the pullback of a dif\/ferential form on $V/H$.
By \hypref{l:reduce to slice}{Lemma},
every $G$-basic form on $G \times_H V$
is the pullback of a dif\/ferential form on $(G \times_H V)/G$.
Because $F$ is an equivariant dif\/feomorphism,
every $G$-basic form on $U$
is the pullback of a dif\/ferential form on $U/G$.
So $\alpha|_U$ is the pullback of a dif\/ferential form on $U/G$.
This implies that $p_1^* \alpha|_u = p_2^* \alpha|_u$, as required.
\end{proof}

\begin{proof}[Proof of Theorem \ref{t:main}]
By \hypref{l:pullback is injection}{Lemma}
and \hypref{p:pullbacksarebasic}{Proposition},
the pullback is an injection into the space of basic forms.
By \hypref{alg iso}{Remark} and \hypref{p:injectivity}{Proposition},
as a map to its image,
the pullback is an isomorphism of dif\/ferential graded algebras
and a dif\/feological dif\/feomorphism.
By \hypref{p:geomquot}{Proposition}, if the identity component of $G$
acts properly, the image is the space of basic forms.
\end{proof}

\begin{Example}[irrational torus, f\/irst construction]
\label{x:irrational torus}
Fix an irrational number $\alpha\in\RR{\smallsetminus}\QQ$.
The corresponding \emph{irrational torus} is
\begin{gather*} T_\alpha := \RR / (\ZZ + \alpha \ZZ). \end{gather*}
It is obtained as the quotient of $\RR$
by the $\ZZ^2$-action $(m,n) \cdot x = x + m + n \alpha$;
note that it is not Hausdorf\/f.
The basic dif\/ferential forms on $\RR$ with respect to this action
are the constant functions and the constant coef\/f\/icient one-forms~$cdx$.
By \hypref{p:geomquot}{Proposition}, each of these
is the pullback of a dif\/ferential form on~$T_\alpha$.
\end{Example}

\begin{Remark} \label{rk:irrational torus}
In Example~\ref{x:irrational torus},
although the topology of $T_\alpha$ is trivial,
its de Rham cohomology is isomorphic to that of a circle.
We note, though, that dif\/ferential forms still do not capture
the richness of the dif\/feology on $T_\alpha$:
by Donato and Iglesias \cite{DoIg},
$T_\alpha$ and $T_\beta$ are dif\/feomorphic
if and only if there exist integers $a$, $b$, $c$, $d$
such that $ad-bc = \pm 1$ and $\alpha = \frac{a + \beta b}{c + \beta d}$.
See Exercise 4 and Exercise 105 of \cite{iglesias}
with solutions at the end of the book.
\end{Remark}

\begin{Example}[irrational torus, second construction]
\label{x:irrational solenoid}
Fix an irrational number $\alpha\in\RR{\smallsetminus}\QQ$.
Consider the quotient
\begin{gather*} \TT^2 / S_\alpha \end{gather*}
of $\TT^2 := \RR^2 / \ZZ^2$ by the \emph{irrational solenoid}
$S_\alpha := \{ [t,\alpha t] \, | \, t \in \RR \} \subset \TT^2$.
It is obtained as the quotient of $\TT^2$ by the $\RR$-action
$t \cdot [x,y] = [x + t , y + \alpha t]$.
The basic forms on $\TT^2$ with respect to this action
are the constant functions and the constant multiples
of the one-form $\alpha dx - dy$.
The quotient $\TT^2/S_\alpha$ is dif\/feomorphic
to the irrational torus $T_\alpha$ of \hypref{x:irrational torus}{Example};
see Exercise~31 of~\cite{iglesias} (with solution at the end
of the book).

In fact, consider the action of $\RR \times \ZZ^2$ on $\RR^2$
that is given by
$(t,m,n) \cdot (x,y) = (x + m + t , y + n + \alpha t)$.
Taking the quotient f\/irst by $\RR$ and then by $\ZZ^2$
(and identifying the f\/irst of these quotients with $\RR$
through the map $(x,y) \mapsto y - \alpha x$)
yields~$T_\alpha$.
Taking the quotient f\/irst by~$\ZZ^2$ and then by~$\RR$ yields $\TT^2/S_\alpha$.
Applying \hypref{l:q in stages}{Lemma} twice,
we get the following commuting diagram:
\begin{gather*} \xymatrix{
 & \RR^2 \ar[dl] \ar[dr] \ar[dd] & \\
 \RR \ar[d] & & \TT^2 \ar[d] \\
T_\alpha \ar[r]^(.3){e} &  \RR^2 / (\RR \times \ZZ^2)
 &   \TT^2/S_\alpha \ar[l]_(.35){e'}
} \end{gather*}
where $e$ and $e'$ are dif\/feomorphisms.
As noted in \hypref{x:irrational torus}{Example},
every basic form on $\RR$ is the pullback of a dif\/feological
dif\/ferential form on $T_\alpha$. By \hypref{l:q in stages}{Lemma},
this implies that every basic form on $\RR^2$
is the pullback of a dif\/feological dif\/ferential form
on $\RR^2/(\RR \times \ZZ^2)$.
Again by \hypref{l:q in stages}{Lemma},
we conclude that every basic form on $\TT^2$
is the pullback of a dif\/feological form on $\TT^2/S_\alpha$.
Thus, the $\RR$-action on $\TT^2$ through $S_\alpha$
satisf\/ies the conclusion of \hypref{p:geomquot}{Proposition},
although it does not satisfy the assumption
of \hypref{p:geomquot}{Proposition}: this $\RR$-action is not proper.
\end{Example}

\appendix
\section{Orbifolds}\label{sec:orbifolds}

Let $X$ be a Hausdorf\/f, second countable topological space.
Fix a positive integer $n$.

The following def\/inition is based on Haef\/liger, \cite[Section~4]{haefliger}.

\begin{enumerate}\itemsep=0pt
\item
An $n$ dimensional \emph{orbifold chart} on $X$ is a triple
$(\tilde{U},\Gamma,\phi)$ where $\tilde{U}\subseteq\RR^n$ is an open ball,
$\Gamma$ is a f\/inite group of dif\/feomorphisms of $\tilde{U}$,
and $\phi \colon \tilde{U}\to X$ is a $\Gamma$-invariant map
onto an open subset $U$ of $X$
that induces a homeomorphism $\tilde{U}/\Gamma \to U$.

\item
Two orbifold charts on $X$,
$(\tilde{U},\Gamma,\phi)$ and $(\tilde{V},\Gamma',\psi)$,
are \emph{compatible}
if for every two points $u \in \tilde{U}$ and $v \in \tilde{V}$
such that $\phi(u) = \psi(v)$ there exist
a neighbourhood $O_u$ of $u$ in $\tilde{U}$
and a neighbourhood $O_v$ of $v$ in $\tilde{V}$
and a dif\/feomorphism $g \colon O_u \to O_v$
that takes $u$ to $v$ and such that $\psi \circ g = \phi$.

\item \looseness=-1
An \emph{orbifold atlas} on $X$ is a set of orbifold charts on $X$
that are pairwise compatible and whose images cover $X$.
Two orbifold atlases are \emph{equivalent} if their union is an orbifold atlas.
\end{enumerate}

The following def\/inition was introduced in \cite{IZKZ}:
A \emph{diffeological orbifold} is a dif\/feological space
that is locally dif\/feomorphic to f\/inite linear quotients of $\RR^n$.

These two def\/initions are equivalent in the following sense.
Given an orbifold atlas on~$X$, there exists a unique dif\/feology on $X$
such that all the homeomorphisms $\tilde{U}/\Gamma \to U$
are dif\/feomor\-phisms. With this dif\/feology, $X$ becomes a dif\/feological
orbifold. Two orbifold atlases are equivalent if and only if
the corresponding dif\/feologies are the same.
Finally, every dif\/feological orbifold structure on $X$
can be obtained in this way.
For details, see \cite[Section~8]{IZKZ}.

Orbifolds were initially introduced by Ichiro Satake \cite{satake56,satake57}
under the name ``V-manifolds''.
Satake's approach is equivalent to Haef\/liger's; see~\cite{IZKZ}.
Satake \cite{satake57} also introduced tensors,
and in particular dif\/ferential forms, on V-manifolds.
Haef\/liger's approach yields the following def\/inition.

Let $\{ (\tU, \Gamma, \psi) \} $ be an orbifold atlas on $X$.
An \emph{orbifold differential form} on $X$
is given by, for each chart $(\tU, \Gamma, \psi)$ in the atlas,
a $\Gamma$-invariant dif\/ferential form $\alpha_{\tU}$ on the domain $\tU$
of the chart. We require the following \emph{compatibility condition}.
For every two charts $(\tU,\Gamma,\phi)$, $(\tilde{V},\Gamma',\psi)$,
and every two points $u \in \tU$ and $v \in \tilde{V}$
with $\phi(u) = \psi(v)$,
there exist a dif\/feomorphism $g \colon O_u \to O_v$
from a neighbourhood of $u$ to a neighbourhood of $v$
that takes $u$ to $v$, such that $\psi \circ g = \phi$,
and such that $g^* (\alpha_{\tilde{V}}|_{O_v}) = \alpha_{\tU}|_{O_u}$.
Two such collections $\{ \alpha_{\tU} \}$ of dif\/ferential forms,
def\/ined on the domains of the charts in two equivalent orbifold atlases,
represent the same orbifold dif\/ferential form
if their union still satisf\/ies the compatibility condition.

Every dif\/feological dif\/ferential form $\alpha$ on $X$
determines an orbifold dif\/ferential form
by associating to every chart $(\tU, \Gamma, \psi)$
the pullback $\psi^* \alpha$.
\hypref{p:finite}{Proposition} implies that this gives a~bijection
between dif\/feological dif\/ferential forms and orbifold dif\/ferential forms.

\section[Sjamaar dif\/ferential forms; case of regular symplectic quotients]{Sjamaar dif\/ferential forms;\\ case of regular symplectic quotients}
\label{sec:sjamaar}

Let a Lie group $G$ act properly on a symplectic manifold $(M,\omega)$
with an (equivariant) momentum map $ \Phi \colon M \to \g^*$.
Let $Z = \Phi^{-1}(0)$ be the zero level set
and $i \colon Z \to M$ its inclusion map.
Let
\begin{gather*}
 Z_\reg = \{ z \in Z \, | \, \text{$\exists$ neighbourhood $U$
of $z$ in $Z$ such that, for all $z' \in U$, } \\
\hphantom{Z_\reg = \{ z \in Z \, |\, \text{$\exists$ neighbourhood $U$ }}{}
 \text{the stabilisers of $z'$ and of $z$ are conjugate in $G$} \}.
\end{gather*}
The set $Z_\reg$, (with the subset dif\/feology induced from~$M$
or, equivalently, from~$Z$) is a manifold,
and it is open and dense in~$Z$ (see \cite{SL}).
The quotient $Z_\reg/G$, (with the quotient dif\/feology induced from $Z_\reg$,
or, equivalently, the subset dif\/feology induced from~$M/G$,)
is also a~manifold.

Above, the connected components of $Z_\reg$ and $Z_\reg/G$
may have dif\/ferent dimensions.
If $M$ is connected and $\Phi$ is proper, then $Z_\reg$ and $Z_\reg/G$
are connected. See, for example, \cite{LMTW}~and~\cite{SL}.

Denote by $i_\reg \colon Z_\reg \to M$ the inclusion map
and by $\pi_\reg \colon Z_\reg \to Z_\reg/G$ the quotient map.

The following def\/inition was introduced (but not yet named)
by Reyer Sjamaar in \cite{sjamaar}:

\begin{Definition}
A \emph{Sjamaar differential $l$-form} $\sigma$ on $Z/G$
is a dif\/ferential $l$-form on $Z_\reg/G$ (in the ordinary sense)
such that there exists $\tilde{\sigma}\in\Omega^l(M)$
satisfying $i_{\reg}^*\tilde{\sigma}=\pi_{\reg}^*\sigma$.
\end{Definition}

\looseness=-1
A special case of a Sjamaar form is the \emph{reduced symplectic form},
$\omega_\red$,
which satisf\/ies $\pi_\reg^* \omega_\red = i_\reg^* \omega$.
The orbit type stratif\/ication on $M$
induces a stratif\/ication of the reduced space $Z/G$,
and the Sjamaar dif\/ferential forms naturally extend to the strata of $Z/G$.
The extensions of $\omega_\red$ to these strata exhibit $Z/G$
as a \emph{stratified symplectic space}
in the sense of Sjamaar and Lerman~\cite{SL}.

The space of Sjamaar forms is closed under wedge products
and forms a subcomplex
of the de Rham complex $(\Omega^*(Z_\reg/G),d)$.
Sjamaar forms satisfy a Poincar\'e lemma, Stokes' theorem,
and a de Rham theorem.

For details, see Sjamaar's paper \cite{sjamaar}.

The reduced space $Z/G$ comes equipped with the quotient dif\/feology inherited from~$Z$, which equals the subset dif\/feology inherited from~$M/G$. We call this
the \emph{subquotient} dif\/feology.

It is now natural to ask how Sjamaar forms on a symplectic quotient~$Z/G$,
which \emph{a-priori} depend on the ambient symplectic manifold~$M$,
relate to the dif\/feological forms on $Z/G$, whose def\/inition is intrinsic.
More precisely, consider the inclusion map
$J \colon Z_\reg/G \to Z/G$. Then we have the pullback map
on dif\/feological forms
\begin{gather*} J^* \colon \ \Omega^l(Z/G) \to \Omega^l(Z_\reg/G) ,\end{gather*}
and we identify the target space with the ordinary dif\/ferential forms
on $Z_\reg/G$. We ask:
\begin{itemize}\itemsep=0pt
\item
Is the space of Sjamaar forms contained in the image of $J^*$?
\item
Is the image of $J^*$ contained in the space of Sjamaar forms?
\item
Is $J^*$ one-to-one?
\end{itemize}

If $0$ is a regular value of the momentum map~$\Phi$,
then it follows from \hypref{p:finite}{Proposition}
that the answers to each of these questions is ``yes''.
If $0$ is a critical value, then the answer to the f\/irst question is ``yes'',
and we do not know the answers to the other two questions.
We refer the reader to Section~3.4 of the second author's thesis~\cite{watts}
for details.

\subsection*{Acknowledgements}
This work is partially supported by the Natural Sciences
and Engineering Council of Canada.
We are grateful to Patrick Iglesias-Zemmour for instructing us on dif\/feology
and to Reyer Sjamaar for his inspiration, as well as to the anonymous referees for excellent suggestions that lead to a~better organisation of the paper.

\pdfbookmark[1]{References}{ref}
\LastPageEnding

\end{document}